\newtheorem{theo}{Theorem}[section]
\newtheorem{prop}{Proposition}[section]
\newtheorem{lem}{Lemma}[section]
\newtheorem{cor}{Corollary}[section]
\theoremstyle{definition}
\newtheorem{example}{Example}[section]
\newtheorem{definition}{Definition}[section]
\newtheorem{remark}{Remark}[section]
\newcommand{\N}{\mathbb N}
\newcommand{\Csi}{\mathcal C^{\infty}}
\newcommand{\Real}{\mathbb R}
\newcommand{\dd}{\mathbf d}
\newcommand{\As}{\mathcal A}
\def\jetbndln{T^n M}
\def\jetbndl#1{T^{#1} M}
\def\jetb#1#2{T^{#1} #2}
\def\jetspacen#1#2#3{T^{#1}_{#2}#3}
\def\jet#1{[#1]}
\def\kdiff#1{\frac{1}{{#1}!}\left. \frac{d^{#1}}{dt^{#1}}\right| _{t=0}}
\def\diff1{\left. \frac{d}{dt}\right| _{t=0}}
\def\jetbndlmn#1{T^{m,n} #1}
\def\xars{x_{(r,s),a}}
\def\differ{T}
\font\black=cmbx10 \font\sblack=cmbx7 \font\ssblack=cmbx5 \font\blackital=cmmib10 \skewchar\blackital='177
\font\sblackital=cmmib7 \skewchar\sblackital='177 \font\ssblackital=cmmib5 \skewchar\ssblackital='177
\font\sanss=cmss10 \font\ssanss=cmss8 scaled 900 \font\sssanss=cmss8 scaled 600 \font\blackboard=msbm10
\font\sblackboard=msbm7 \font\ssblackboard=msbm5 \font\caligr=eusm10 \font\scaligr=eusm7 \font\sscaligr=eusm5
 \font\fraktur=eufm10 \font\sfraktur=eufm7 \font\ssfraktur=eufm5 
\font\bsymb=cmsy10 scaled\magstep2
\def\all#1{\setbox0=\hbox{\lower1.5pt\hbox{\bsymb
       \char"38}}\setbox1=\hbox{$_{#1}$} \box0\lower2pt\box1\;}
\def\exi#1{\setbox0=\hbox{\lower1.5pt\hbox{\bsymb \char"39}}
       \setbox1=\hbox{$_{#1}$} \box0\lower2pt\box1\;}
\def\tx#1{{\fam0\relax#1}}
\def\hpb#1{\setbox0=\hbox{${#1}$}
    \copy0 \kern-\wd0 \kern.2pt \box0}
\def\vpb#1{\setbox0=\hbox{${#1}$}
    \copy0 \kern-\wd0 \raise.08pt \box0}
\def\pmb#1{\setbox0\hbox{${#1}$} \copy0 \kern-\wd0 \kern.2pt \box0}
\def\pmbb#1{\setbox0\hbox{${#1}$} \copy0 \kern-\wd0
      \kern.2pt \copy0 \kern-\wd0 \kern.2pt \box0}
\def\pmbbb#1{\setbox0\hbox{${#1}$} \copy0 \kern-\wd0
      \kern.2pt \copy0 \kern-\wd0 \kern.2pt
    \copy0 \kern-\wd0 \kern.2pt \box0}
\def\pmxb#1{\setbox0\hbox{${#1}$} \copy0 \kern-\wd0
      \kern.2pt \copy0 \kern-\wd0 \kern.2pt
      \copy0 \kern-\wd0 \kern.2pt \copy0 \kern-\wd0 \kern.2pt \box0}
\def\pmxbb#1{\setbox0\hbox{${#1}$} \copy0 \kern-\wd0 \kern.2pt
      \copy0 \kern-\wd0 \kern.2pt
      \copy0 \kern-\wd0 \kern.2pt \copy0 \kern-\wd0 \kern.2pt
      \copy0 \kern-\wd0 \kern.2pt \box0}
\def\xd{\tx{d}}
\def\xi{\tx{i}}
\newcommand{\ee}{\end{equation}}
\newcommand{\ra}{\rightarrow}
\newcommand{\bea}{\begin{eqnarray}}
\newcommand{\eea}{\end{eqnarray}}
\newcommand{\beas}{\begin{eqnarray*}}
\newcommand{\eeas}{\end{eqnarray*}}
\newcommand{\R}{\mathbb{R}}
\newcommand{\GL}{\operatorname{GL}}
\newcommand{\GH}{\operatorname{GG}}
\newcommand{\nn}{\nonumber}
\newcommand{\pa}{\partial}
\newcommand{\ti}{\times}
\mathchardef\za="710B  %\alpha
\mathchardef\zb="710C  %\beta
\mathchardef\zg="710D  %\gamma
\mathchardef\zd="710E  %\delta
\mathchardef\zve="710F %\epsilon
\mathchardef\zz="7110  %\zeta
\mathchardef\zh="7111  %\eta
\mathchardef\zvy="7112 %\theta
\mathchardef\zi="7113  %\iota
\mathchardef\zk="7114  %\kappa
\mathchardef\zl="7115  %\lambda
\mathchardef\zm="7116  %\mu
\mathchardef\zn="7117  %\nu
\mathchardef\zx="7118  %\xi
\mathchardef\zp="7119  %\pi
\mathchardef\zr="711A  %\rho
\mathchardef\zs="711B  %\sigma
\mathchardef\zt="711C  %\tau
\mathchardef\zu="711D  %\upsilon
\mathchardef\zvf="711E %\phi
\mathchardef\zq="711F  %\chi
\mathchardef\zc="7120  %\psi
\mathchardef\zw="7121  %\omega
\mathchardef\ze="7122  %\varepsilon
\mathchardef\zy="7123  %\vartheta
\mathchardef\zf="7124  %\varomega
\mathchardef\zvr="7125 %\varrho
\mathchardef\zvs="7126 %\varsigma
\mathchardef\zf="7127  %\varphi
\mathchardef\zG="7000  %\Gamma
\mathchardef\zD="7001  %\Delta
\mathchardef\zY="7002  %\Theta
\mathchardef\zL="7003  %\Lambda
\mathchardef\zX="7004  %\Xi
\mathchardef\zP="7005  %\Pi
\mathchardef\zS="7006  %\Sigma
\mathchardef\zU="7007  %\Upsilon
\mathchardef\zF="7008  %\Phi
\mathchardef\zW="700A  %\Omega
\newcommand{\epf}{\hfill$\Box$}
\newcommand{\bepf}{\noindent\textit{Proof.-} }
\def\be#1{\begin{equation}\label{#1}}
\begin{document}
\title{Graded bundles and homogeneity structures
\thanks{Research
supported by the Polish Ministry of Science and Higher Education under the grant N N201 416839.} }

        \author{
        Janusz Grabowski$^1$, Miko\l aj Rotkiewicz$^2$\\
        \\
         $^1$ {\it Institute of Mathematics}\\
                {\it Polish Academy of Sciences}\\\\
         $^2$ {\it Institute of Mathematics}\\
                {\it University of Warsaw}
                }
\date{}
\maketitle

\begin{abstract} We introduce the concept of a {\it graded bundle} which is a natural generalization of the concept of  a vector bundle and whose standard examples are higher tangent bundles $T^n Q$ playing a fundamental role in higher order Lagrangian formalisms. Graded bundles are graded manifolds in the sense that we can choose an atlas whose local coordinates are homogeneous functions of degrees $0,1,\dots,n$. We prove that graded bundles have a convenient equivalent description as {\it homogeneity structures}, i.e. manifolds with a smooth action of the multiplicative monoid $(\R_{\ge 0},\cdot)$ of non-negative reals. The main result states that each homogeneity structure admits an atlas whose local coordinates are homogeneous.
Considering a natural compatibility condition of homogeneity structures we formulate, in turn, the concept of {\it double}
({\it $r$-tuple}, in general) {\it graded bundle} --  a broad generalization of the concept of double
($r$-tuple) vector bundle. Double graded bundles are proven to be locally trivial in the sense that we
can find local coordinates which are simultaneously homogeneous with respect to both homogeneity structures.

\bigskip\noindent
\textit{MSC 2010: 53C15 (Primary); 53C10, 55R10, 58A32, 58A50, 18D05 (Secondary).}

\medskip\noindent
\textit{Key words: homogeneous functions, graded manifolds, fiber bundles, N-manifolds.}
\end{abstract}

\bigskip

\section{Introduction}
Starting from the well-known observation that differentiable 1-homogeneous functions on $\R^N$ are
automatically linear, we provided in \cite{GR} an easy and effective characterization of these smooth actions
of the monoid $(\R_{\ge 0},\cdot)$ of multiplicative non-negative reals on a manifold $M$ which come from
homotheties of vector bundle structures on $M$. We obtained the vector bundle structure by an identification
of $M$ with a vector subbundle of $TM$. All this, in turn, allowed us to describe several concepts of the
theory of vector bundles purely in terms of the {\it homogeneity structures} on vector bundles, defined by the
corresponding homotheties.

For instance, a vector bundle morphism is just a smooth map which intertwines the homotheties, and a vector
subbundle turns out to be just a submanifold which is invariant with respect to the homotheties. What is more,
while it is not easy to explain a compatibility between additive structures, this question becomes nearly
obvious in the language of homogeneity: two vector bundle structures on a manifold are compatible if the
corresponding homotheties commute. This leads to an elegant and effective definition of double (or $n$-tuple)
vector bundle.

The language of homogeneity can be also easily adapted in the case of supergeometry, allowing us to associate
with any $n$-tuple vector bundle $E$ an $\N^n$-graded supermanifold, the `superization' of $E$. This is a natural
generalization of the well-known procedure $E\mapsto \Pi E$ of reversing parity in the fibers of a vector
bundle $E$.

All this suggests that studying homogeneity structures which are more general than those induced by polynomial rings
generated only by 0- and 1-homogeneous functions, like in the case of vector bundles, can be of great interest and can provide us with a useful tool for dealing with other interesting geometric categories of fibrations.

We start this program in the present paper, where a {\em graded bundle of degree} $n$ is defined as a
fibration $\zp:M\ra M_0$ whose fibers are consistently identified with $\R^N$ equipped with a structure of
{\it graded space of degree $n$}. The latter assigns to canonical coordinates in $\R$ their {\it degrees}
taking values in $\{ 1,\dots, n\}$. Consequently, a graded bundle $M$ of degree $n$ possesses an atlas
whose local coordinates have integer degrees between 0 and $n$, compatible with the changes of local coordinates.
Moreover, it admits a canonical {\it homogeneity structure} defined by the obvious action $h:[0,\infty)\times
M\ra M$ of the multiplicative semigroup $(\R_{\ge 0},\cdot)$ in which $h_t=h(t,\cdot)$ maps the coordinate $x$ of degree $k$ into $t^kx$.
In particular, $h_0$ is just the fibration projection.

It makes sense to speak about global homogeneous functions on $M$ and the corresponding polynomial algebra, but
we must stress that a graded bundle is not just a manifold with consistently defined homogeneity of local
coordinates. What we require additionally is equivalent to the fact that the natural {\it weight vector field}
encoding the homogeneity is complete. One can see easily the difference, comparing $\R^N$ with the natural
homogeneity structure, in which all linear functions have degree 1, with an open disc in $\R^N$ having the
coordinates inherited from $\R^N$. It makes sense to speak about homogeneity of functions on the disc, but the
disc is not a graded space in our sense, as it is not complete.

Our fundamental examples of graded bundles of degree $n$ are the $n$th tangent bundles $T^nM_0$, i.e. the
bundles  $J^n_0(\R,M_0)$ of $n$th jets of curves in $M_0$. The higher tangent bundles have been extensively studied, mainly in relation to higher order Lagrangian formalisms \cite{CSC,dLR,Mi1,Mi2,Tul0,YI}. Also
$n$-vector bundles, e.g. the double vector bundles $TTM_0$ or $T^*TM_0\simeq TT^*M_0$, are canonically
graded bundles.

In the present paper, we find, like in the case of the vector bundles \cite{GR}, a characterization of these homogeneity structures on a manifold $M$ (i.e. those actions $h$ of the multiplicative semigroup $(\R_{\ge 0},\cdot)$) which come from the structure of a graded bundle of degree $n$. The necessary and sufficient condition for $h$ tells us that the $n$th jets of the curves $t\mapsto h(t,p)$ at 0 vanish only for $p\in M_0$. We call such actions {\it homogeneity structures
of degree $n$} and our main result says, roughly speaking, that for homogeneity structures of degree $n$ on a
manifold $M$ we can find an atlas whose local charts consist of homogeneous coordinates. We show that homogeneity structures can be equivalently characterized as manifolds equipped with a smooth action of the monoid $(\R_{\ge 0},\cdot)$. In other words, if $h(1,\cdot)=id_M$, the nondegeneracy of jets follows automatically. Note also that we can use only non-negative reals, as their action can be uniquely extended to an action of $(\R,\cdot)$.

One should remark that graded bundles have already appeared in the supergeometry, where {\em N-manifolds of
degree} $n$ have been studied by {\v{S}}evera and Roytenberg \cite{Sev, Roy}, and applied in the theory of
Courant algebroids and Dirac structures. N-manifolds are exactly supermanifolds with local coordinates of
degrees between 0 and $n$ whose parity coincides with the coordinate degree parity. The equivalence with homogeneity structures has been claimed in this case as well; however, some problems in the
supergeometric case are simpler, as odd coordinates are always `linear' and `complete'. We use the term `graded bundle' rather than `graded manifold' to distinguish our approach from those based on various different concepts of graded manifolds, usually associated with a supermanifold structure (see the discussion in \cite{Vo}).

Using a natural concept of compatibility of homogeneity structures, we derive also the concept of a double
(or, more generally, $r$-tuple) homogeneity structure (or graded bundle) as consisting of a manifold $F$ with two commuting homogeneity structures $h^1,h^2$, $h^1_t\circ h^2_u=h^2_u\circ h^1_t$. The main result in this direction says
that double homogeneity structures are locally trivial in a natural sense: we can find local coordinates which
are simultaneously homogeneous with respect to both homogeneity structures. Note also that double homogeneity
structures, unlike double vector bundle structures, generate a new homogeneity structure $h$ defined by
$h_t=h^1_t\circ h^2_t$.

There are, of course, natural questions concerning the concepts of duality for homogeneity structures and
their applications in physics, investigated recently by Tulczyjew \cite{Tul} in the context of the higher tangent
bundles (see also \cite{Mi2}), which we decided, however, to postpone to a separate paper.

\section{Graded spaces}

Let us start with simple observations which, however, will explain the motivation for more advanced concepts
introduced further in the paper.

The vector space $\R^N$ with canonical coordinates $y=(y^1,\dots,y^N)$ has naturally defined homotheties
$h_t$, $t\in \R$, related to the multiplication by reals in $\R^N$,
$$h_t(y)=t. y=(t\, y^1,\dots,t\, y^N)\,.$$
As $h_t\circ h_s=h_{ts}$, the homotheties define an action of the multiplicative semigroup $(\R,\cdot)$, and so
its sub-semigroup $(\R_{\ge 0}, \cdot)$ of non-negative reals. Inside $(\R_{\ge 0}, \cdot)$ we have the
one-parameter multiplicative group $(\R_{>0}, \cdot)$ of positive reals, canonically isomorphic, according to
the map $\R\ni t\mapsto e^t\in\R_{>0}$, to the group of additive reals $(\R,+)$, so that $h_{e^t}$, $t\in\R$,
is a one-parameter group of diffeomorphisms of $\R^N$ with the generator called the {\it Euler vector field}
$$\zD(y)=\left.\frac{d}{dt}\right| _{t=1}h_t(y)=\sum_iy^i\pa_{y^i}\,,$$
i.e., $h_{e^t}=\exp{(t\zD)}$. Note that the Euler vector field uniquely determines
$h_t$ for $t\ge 0$, as $$h_0=\lim_{s\to-\infty}\exp{(s\zD)}\,.$$ The presence of the semigroup action
$h:\R_{\ge 0}\times\R^N\to\R^N$, $h(t,y)=h_t(y)$, allows us to define {\it homogeneous functions of degree
$r$} on $\R^N$, that we write $\deg(f)=r$, as functions $f:\R^N\to\R$ satisfying
\be{hf}
f\circ h_t=t^rf,\quad \text{for all} \ t> 0\,.
\ee
In terms of the Euler vector field, (\ref{hf}) reads
\be{hf1}
\zD(f)=rf\,.
\ee
It follows  that the algebra $\Csi(\R^N)$ of smooth functions on $\R^N$ contains a distinguished graded
subalgebra
$$\As(\R^N) = \bigoplus_{k=0}^\infty \As_k(\R^N)
$$
of {\it polynomial functions}, where $\As_k(\R^N)$ is the space of homogeneous polynomials of degree $k$
on $\R^N$. According to Euler's Homogeneous Function Theorem, homogeneous functions of degree 1 are linear,
thus the whole structure of the vector space is actually encoded in $h$ (or in $\zD$). In particular, any
diffeomorphism $\zf:\R^N\to\R^N$ is linear if and only if it induces an isomorphism of the graded algebra
$\As(\R^N)$ of polynomial functions, if and only if it intertwines the action $h$, $\zf\circ h_t=h_t\circ\zf$,
and if and only if it respects the Euler vector field, $\zf_*(\zD)=\zD$. Hence, the homogeneity of $\R^N$ can
be equivalently described in terms of either $\As(\R^N)$, or $h$, or $\zD$. The same can actually be done in
the case of any finite-dimensional real vector space $V$ replacing $\R^N$.

Since we want to extend this model and, in the simplest situation, to allow coordinates in $\R^N$ to have
various degrees of homogeneity, we propose the following.

\begin{definition}
A {\it standard homogeneity structure of degree $n$ and rank $\dd =  (d_1, \ldots, d_n)$} on $\R^N$, where $N
=\sum_{i=1}^n d_i$, is the action $h : \R_{\ge 0} \times \R^N \to \R^N$ of the semigroup $(\R_{\ge 0}, \cdot)$
%determined by
%an $N$-tuple of degrees $(w_1, \ldots, w_N)$, $w_i \in\{ 1,\dots,n\}$ for
%$1\leq i\leq N$,
which in the canonical coordinates $(y^i)$ of $\R^N$ reads
\be{hf2} h_t(y^1,\dots,y^N)=(t^{w_1}y^1,\dots,t^{w_N}y^N),\, \ee
where $h_t = h(t, \cdot)$ and $w_i = j$ for $d_1+\ldots+d_{j-1} + 1\leq i\leq d_1+\ldots+d_j$. In other words,
$d_i$ is the number of coordinates of degree $i$. The space $\R^N$ equipped with the standard homogeneity
structure of rank $\dd$ we shall denote with $\R^\dd$ and call the {\it standard graded space of rank
$\dd$}. Of course, one can identify $\R^\dd$ also by declaring the homogeneity degrees $w_i$ of the coordinate
$y^i$ for all $1\le i\le N$.
\end{definition}
\begin{definition} A {\it graded space of degree $n$} and rank $\dd =  (d_1, \ldots, d_n)$,
where $N =\sum_{i=1}^n d_i$, is a smooth manifold $M$ equipped with a smooth action $h$ of $(\R_{\ge 0},
\cdot)$ for which there exists a diffeomorphism $\zf : M\ra\R^\dd$ onto $\R^N$ equipped with the standard
homogeneity structure of rank $\dd$, intertwining the actions of $(\R_{\ge 0}, \cdot)$. We call the maps $h_t$ the {\it homotheties} of the graded space $M$ .

A smooth function $f$ on a homogeneity  space $(M, h)$ of degree $n$ is called {\it homogeneous of degree
$r$}, which we write as $\deg(f)=r$, if (\ref{hf}) is satisfied. Hence, the canonical coordinate $y^i$ on
$\R^\dd$, as well as the smooth function $y^i\circ \zf$ on $M$, denoted with some abuse of notation also
as $y^i$, has the degree $1\le w_i\le n$. We shall call the (global) functions $(y^i)$ on $M$ {\it homogeneous
coordinates} and $w=(w_1,\dots,w_N)$ the {\it weight vector}. Equivalently, one can choose
global coordinates $(y^1,\dots,y^N)$ establishing a diffeomorphism onto $\R^N$ and declare $y^i$ to be
homogeneous of degree $w_i$.
A morphism of graded spaces is a smooth map $\psi:M_1\ra M_2$ that intertwines the actions of $(\R_{\ge
0}, \cdot)$.
%We shall show (Corollary \ref{cor:auto}) that it is equivalent to say that $\psi$
%preserves the homogeneous degree, i.e. $f\circ\psi$ is homogeneous
%of degree $r$ if $f$ is so.
\end{definition}

Like in the case of a vector space, we can define the polynomial algebra
$$\As(M) = \bigoplus_{k=0}^\infty \As_k(M)\,,
$$
where $\As_k(M)$ is the space of smooth homogeneous functions of degree $k$ on $M$.

Note that $h_1=id_M$, so the action (\ref{hf2}) is actually a monoid action of $(\R_{\ge 0},\cdot)$ and that
the rank $\dd$ (or the weight vector $w$) of a graded space is uniquely determined. Indeed, we have $h_t\circ h_s=h_{ts}$ and
$w_iy^i=\left. \frac{d}{dt}\right| _{t=1}\left( y^i\circ h_t\right)$. We can therefore associate with the graded space $(M,h)$ the {\it weight vector field} $\Delta_M$ on $M$, $\Delta_M(p) =  \left. \frac{d}{dt}\right|
_{t=1} h_t(p)$, which in homogeneous coordinates reads $$\Delta_M=\sum_{i=1}^Nw_iy^i\partial_{y^i}\,.$$ Since
$w_i>0$, the weight vector field is complete and if $t\mapsto \exp{(t\zD_M)}$ is the flow of diffeomorphisms
it generates, then we have $\exp{(t\zD_M)}=h_{e^t}$ for all $t\in\R$. Thus, the action by homotheties is
completely determined by the weight vector field and {\it vice versa}. Moreover, the constants $w_i$ are
invariants of the orbit of the vector field $\zD_M$ under diffeomorphisms. This easily implies the following.
\begin{theo} Let $M_a$ be a graded space with homotheties $(h_t^a)$, $a=1,2$, and $\psi:M_1\ra M_2$ be a smooth map.
The following are equivalent:
\begin{description}
\item{(a)}  The map $\psi$ is a morphism of graded spaces, i.e. $\psi\circ h^1_t=h^2_t\circ\psi$ for $t\ge 0$; \item{(b)}  For each smooth homogeneous
function $f:M_2\ra\R$ of degree $r$, the function $f\circ\psi$ is homogeneous of degree $r$, i.e.,
$\psi^*(\As_r(M_2))\subset\As_r(M_1)$, $r=1,2,\dots$; \item{(c)}  The map $\psi$ relates the weight vector fields $\zD_{M_1}$
and $\zD_{M_1}$, $\psi_*(\zD_{M_1})\subset\zD_{M_2}$.
\end{description}
In particular, two graded spaces are isomorphic if and only if they have the same rank.
\end{theo}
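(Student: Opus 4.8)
The plan is to establish the cycle of implications (a) $\Rightarrow$ (b) $\Rightarrow$ (c) $\Rightarrow$ (a), together with the final ``in particular'' statement. The implication (a) $\Rightarrow$ (b) is immediate from the definitions: if $\psi\circ h^1_t = h^2_t\circ\psi$ and $f\circ h^2_t = t^r f$ for all $t>0$, then $(f\circ\psi)\circ h^1_t = f\circ h^2_t\circ\psi = t^r(f\circ\psi)$, so $\deg(f\circ\psi)=r$; thus $\psi^*$ maps $\As_r(M_2)$ into $\As_r(M_1)$. For (a) $\Rightarrow$ (c) one differentiates the intertwining relation $\psi\circ h^1_t = h^2_t\circ\psi$ at $t=1$: the left side gives $T\psi\circ\zD_{M_1}$ while the right side gives $\zD_{M_2}\circ\psi$, which is exactly the statement that $\psi$ relates the two weight vector fields (I would note the typos in the statement: it should read $\psi_*(\zD_{M_1}) = \zD_{M_2}$ in the sense of $\psi$-relatedness, with $\zD_{M_2}$ on the right, not $\zD_{M_1}$).

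For the reverse directions I would use the exponential description of the homotheties recalled just before the theorem, namely $h_{e^t} = \exp(t\zD_M)$ for all $t\in\R$, together with $h_0 = \lim_{s\to-\infty}\exp(s\zD_M)$. If $\psi$ relates $\zD_{M_1}$ to $\zD_{M_2}$, then $\psi$ intertwines the flows, $\psi\circ\exp(t\zD_{M_1}) = \exp(t\zD_{M_2})\circ\psi$, hence $\psi\circ h^1_s = h^2_s\circ\psi$ for all $s>0$; passing to the limit $s\to 0^+$ (using continuity of $h^a$ as actions of $(\R_{\ge 0},\cdot)$, or equivalently that $h^a_0$ is the well-defined limit above) gives the relation also for $s=0$, which is (a). This proves (c) $\Rightarrow$ (a). For (b) $\Rightarrow$ (a) or (b) $\Rightarrow$ (c) I would argue via homogeneous coordinates: pick homogeneous global coordinates $(y^i)$ on $M_2$ with weights $w_i$; by (b) each $\psi^*y^i = y^i\circ\psi$ is homogeneous of degree $w_i$ on $M_1$, i.e. $(y^i\circ\psi)\circ h^1_t = t^{w_i}(y^i\circ\psi)$ for $t>0$, which says precisely that the $M_2$-coordinates of $\psi\circ h^1_t(p)$ and of $h^2_t\circ\psi(p)$ agree for all $t>0$; since the $y^i$ separate points on $M_2$, we get (a) for $t>0$, and then for $t=0$ by the limit argument again.

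Finally, the ``in particular'' statement: if $\psi$ is an isomorphism of graded spaces then by the equivalence it carries $\As(M_1)$ isomorphically (as a graded algebra) onto $\As(M_2)$, and the rank $\dd$ is read off from the graded algebra (the dimension $d_k$ is the number of coordinates of weight $k$, which can be recovered from $\As$ as in the discussion preceding the theorem), so $M_1$ and $M_2$ have the same rank; conversely, if both have rank $\dd$, then by definition each admits a diffeomorphism intertwining the $(\R_{\ge 0},\cdot)$-action with the standard model $\R^{\dd}$, and composing one with the inverse of the other yields an isomorphism $M_1\to M_2$.

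The routine parts are the definitional implications; the only point requiring a little care -- and the one I would flag as the main (mild) obstacle -- is the passage from the relation at $t>0$ to $t=0$, which is why the remark that $h$ is a genuine monoid action (equivalently, that $h_0 = \lim_{s\to-\infty}\exp(s\zD_M)$ is forced by the weight vector field) is essential: without completeness and positivity of the weights, the value $h_0$ would not be determined by the flow, and the equivalence would fail.
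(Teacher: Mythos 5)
Your proof is correct and follows exactly the route the paper intends: the paper gives no explicit proof, asserting only that the theorem "easily" follows from the preceding observations that the action $h$ and the weight vector field $\zD_M$ determine each other via $\exp(t\zD_M)=h_{e^t}$ and $h_0=\lim_{s\to-\infty}\exp(s\zD_M)$, and that the weights $w_i$ are diffeomorphism invariants of $\zD_M$ — which is precisely the content of your implications and of your argument for the "in particular" clause. Your identification of the typos in (c) and your flagging of the $t\to 0^+$ limit (where completeness and positivity of the weights enter) are both apt.
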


\begin{remark}
We must stress that requiring the global diffeomorphism with the standard graded space $\R^\dd$ is very
important for having the action $h$ properly defined; it is not enough to assume some degrees (weights) for some
coordinates in $M$. Indeed, we can cut the open unit disc from $\R^N$ with the same coordinates and degrees,
but we do not get a graded space in this way, as the map $h_t$ is not defined in the disc for $t>1$ (the
weight vector field restricted to the disc is not complete).
\end{remark}

We have the following characterization of homogeneous functions on a graded space which shows that they
belong to the polynomial algebra $\R[y^1, \ldots, y^N]$ considered as a graded algebra with the $\N$-gradation
induced by degrees of $y^1, \ldots, y^N$. Note that, according to (\ref{hf}),  a polynomial $f\in \R[y^1,
\ldots, y^N]$ belonging to a component of gradation $k\in \N$, when considered as a function on a graded space, also has degree $k$. The following lemma can be viewed as a generalization of the Euler's Homogeneous
Function Theorem.
\begin{lem}\label{L:e}
Let $M$ be a graded space of dimension $N$  with homogeneous coordinates $(y^i)$ and the weight vector
$w=(w_1,\dots,w_n)$. Then, any smooth homogeneous function on $M$ is a polynomial function in variables
$(y^i)$, $f\in\R[y^1,\dots,y^N]$. In particular, $\As(M)=\R[y^1,\dots,y^N]$ is the polynomial ring in
variables $y^1,\dots,y^N$, although equipped with the non-standard gradation implemented by fixing the degree
$w_i$ of $y^i$.
\end{lem}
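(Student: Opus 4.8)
The plan is to reduce the claim to the classical Euler theorem by a careful bookkeeping of Taylor expansions. Fix homogeneous coordinates $(y^i)$ with weights $w_i$ and let $f$ be smooth with $\deg(f)=r$, i.e. $f\circ h_t=t^rf$ for $t>0$. The key idea is that for any point $p$ the curve $t\mapsto h_t(p)$ has, in the coordinates $y^i$, the explicit form $t\mapsto(t^{w_1}y^1(p),\dots,t^{w_N}y^N(p))$, so composing with $f$ and Taylor-expanding $f$ around $0=h_0(p)$ produces a power series in $t$ whose coefficients are the homogeneous (in the ordinary polynomial sense, but with the standard grading) pieces of the Taylor polynomials of $f$. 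First I would write, for the Taylor expansion of $f$ at the origin, $f=\sum_{|\alpha|\le m}c_\alpha y^\alpha + R_m$ with $R_m=o(|y|^m)$; substituting $h_t(p)$ gives $f(h_t(p))=\sum_\alpha c_\alpha t^{\langle w,\alpha\rangle}y^\alpha(p)+R_m(h_t(p))$, where $\langle w,\alpha\rangle=\sum_i w_i\alpha_i$. On the other hand this equals $t^r f(p)$.

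Second, I would exploit the identity $\sum_\alpha c_\alpha t^{\langle w,\alpha\rangle}y^\alpha(p)+R_m(h_t(p))=t^r f(p)$ valid for all $t>0$ and all $p$. Grouping the monomials $y^\alpha$ by the value $\langle w,\alpha\rangle=:k$, define $f_k=\sum_{\langle w,\alpha\rangle=k}c_\alpha y^\alpha$, a genuine polynomial. Because all $w_i\ge 1$, each fixed total polynomial degree $|\alpha|$ contributes only finitely many possible values of $k$, and $\langle w,\alpha\rangle\le n|\alpha|$, so truncating the Taylor series at order $m$ captures exactly all monomials with $k\le m$. Thus $\sum_{k\le m} t^k f_k(p)+R_m(h_t(p))=t^r f(p)$. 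The remainder term is controlled: since $R_m(y)=o(|y|^m)$ and $|h_t(p)|\le C(p)\,t$ for $t\le 1$ (again using $w_i\ge 1$), we get $R_m(h_t(p))=o(t^m)$ as $t\to 0^+$. Letting $t\to 0^+$ and comparing coefficients of powers of $t$ on both sides — legitimate since the left side is, up to $o(t^m)$, a polynomial in $t$ and the right side is $t^r f(p)$ — yields $f_k(p)=0$ for all $k\ne r$ with $k\le m$, and $f_r(p)=f(p)$, for every $p$. Since $m$ is arbitrary, this shows $f=f_r$ is a polynomial, indeed a homogeneous polynomial of degree $r$ in the $w$-grading, which is exactly the assertion; summing over $r$ gives $\As(M)=\R[y^1,\dots,y^N]$ with the stated grading.

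The main obstacle I anticipate is making the comparison of $t$-expansions rigorous: one must argue that an identity of the form $P(t)+o(t^m)=t^r f(p)$ on $(0,1]$, where $P$ is a polynomial of degree $\le m$ in $t$, forces $P(t)=t^r f(p)$ (hence the vanishing of the off-degree coefficients), and this needs the remainder estimate $R_m(h_t(p))=o(t^m)$ uniformly enough in the sense of being $o(t^m)$ for each fixed $p$, which is where the positivity $w_i>0$ is essential. A clean way to phrase this is to divide by $t^{m}$ and iterate, peeling off coefficients $f_0(p),f_1(p),\dots$ one at a time by repeatedly taking limits $t\to 0^+$; at step $k$ one uses that $t^{-k}\bigl(R_m(h_t(p))\bigr)\to 0$ for $k<m$ wait — more precisely one first isolates the lowest-order term and proceeds upward. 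Alternatively, and perhaps more slickly, one can invoke the weight vector field: $\zD_M(f)=rf$ is a linear PDE, and since $\zD_M=\sum w_i y^i\partial_{y^i}$ is diagonalizable on each space of $k$-jets at the origin with eigenvalues $\langle w,\alpha\rangle$, the $k$-jet of $f$ at $0$ must lie in the $r$-eigenspace, i.e. be a $w$-homogeneous polynomial of degree $r$; since this holds for all $k$ and $f$ is smooth with all jets concentrated in finitely many monomials, $f$ equals its Taylor polynomial. I would present the Taylor-expansion argument as the primary proof and remark on the vector-field viewpoint as an aside.
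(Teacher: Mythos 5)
Your argument is correct, but it takes a genuinely different route from the paper's. The paper proves the lemma by induction on the homogeneity degree $k$: differentiating the identity $f(h(t,x)+h(t,s\varepsilon))=t^k f(x+s\varepsilon)$ with respect to $s$ at $s=0$, where $\varepsilon$ points in a coordinate direction of weight $r\ge 1$, shows that $\partial f/\partial y^i$ is homogeneous of the strictly smaller degree $k-r$; the induction bottoms out on the observations that continuous $0$-homogeneous functions are constant and that there is no nonzero smooth homogeneous function of negative degree, and one concludes because a smooth function on $\R^N$ whose first partials are all polynomials is itself a polynomial. You instead Taylor-expand $f$ at the fixed point of $h_0$ and compare powers of $t$ along the orbit curves. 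Both proofs hinge on the positivity $w_i\ge 1$ --- in the paper to force the degree of the partials to drop, in yours to get $R_m(h_t(p))=o(t^m)$ and to ensure that each weight $k$ involves only finitely many monomials. The inductive proof is shorter and avoids remainder estimates entirely; yours is more explicit, exhibiting $f$ as the weighted-degree-$r$ component of its own Taylor polynomial and thereby identifying $\As_r(M)$ with the span of the monomials $y^\alpha$ satisfying $\sum_i w_i\alpha_i=r$ in one stroke. Two points to tidy in a final write-up: the degree-$\le m$ Taylor polynomial also contains monomials of weighted degree greater than $m$, which must be absorbed into the $o(t^m)$ error together with $R_m(h_t(p))$ (harmless, since $t^k=o(t^m)$ for $k>m$); and the coefficient comparison should simply be stated as the fact that a polynomial $Q(t)$ of degree at most $m$ with $Q(t)=o(t^m)$ as $t\to 0^+$ vanishes identically --- your closing paragraph gestures at this but gets tangled, and the clean statement needs no iteration.
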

\bepf We shall prove the lemma by induction with respect to $k$ -- the degree of $f$.
We can identify $M$ with $\Real^N$ being a graded space of degree $n$ with coordinates $(x_{r, a})$,
$1\leq r\leq n$, $1\leq a\leq n_r$, $\sum_r n_r = N$, where $x_{r, a}$ is of degree $r$.

Let us first notice that the only continuous $0$-homogeneous functions are constants and there are no non-zero smooth
$k$-homogeneous functions for $k<0$. Let us assume that $f$ is $k$-homogeneous, $k>0$, i.e., $f(h(t, p))=
t^k\cdot f(p)$ for all $t>0$ and $p\in \Real^N$, where
$$
h(t,x_{r, a})= h_t(x_{r,a}) = (t^r \cdot  x_{r, a})\,.
$$
Let us take $\ze\in\Real^N$,  $\ze=(0,\ldots, 0,1,0\ldots,0)$, with $x_{r,a}(\ze) = 1$. Differentiating
$f(h(t,x) + h(t, s\ze)) = t^k\,f(x+s\ze)$ with respect to $s$ at $s=0$, we get
$$
t^r \frac{\partial f}{\partial x_{r,a}} (h(t,x)) = t^k \frac{\partial f}{\partial x_{r,a}} (x).
$$
Therefore $\frac{\partial f}{\partial x_{r, a}}$ is $(k-r)$-homogeneous, so, by the inductive assumption,
$\frac{\partial f}{\partial x_{r, a}}$ is a polynomial. As all partial derivatives of $f$ are polynomials in
variables $x_{r, a}$, the same is true for $f$, that completes the proof.

\epf

 \begin{cor}\label{cor:auto} Any automorphism of a graded space $M$ is a diffeomorphism of $M$ which is polynomial
 in homogeneous coordinates, i.e., in homogeneous coordinates $(y^i)$ with the weight vector $w=(w_1,\dots,w_N)$ it takes the form
\be{homizo}\psi(y^1,\dots,y^N)=\left(\sum_{\langle w|k\rangle=w_{i}}
\za^i_k\,(y^1)^{k_1}\cdots(y^N)^{k_N}\right)_{i=1}^N\,,
\ee
where $k=(k_1,\dots,k_N)$, $\za^i_k\in\R$ and
$$\langle w|k\rangle=\sum_{j=1}^Nw_jk_j\,.
$$
\end{cor}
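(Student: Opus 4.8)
The plan is to deduce Corollary~\ref{cor:auto} directly from Lemma~\ref{L:e} together with part~(b) of Theorem~2.1. First I would observe that an automorphism $\psi:M\to M$ of a graded space is, in particular, a morphism of graded spaces, so by Theorem~2.1(b) it pulls back homogeneous functions of degree $r$ to homogeneous functions of degree $r$. Applying this to each homogeneous coordinate $y^i$, whose degree is $w_i$, we get that each component $\psi^i = y^i\circ\psi$ is a smooth homogeneous function on $M$ of degree $w_i$.

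Next I would invoke Lemma~\ref{L:e}: every smooth homogeneous function on $M$ of degree $k$ is a polynomial in the homogeneous coordinates $(y^1,\dots,y^N)$ lying in the component of the non-standard gradation of degree $k$. Concretely, a monomial $(y^1)^{k_1}\cdots(y^N)^{k_N}$ has degree $\langle w|k\rangle = \sum_j w_j k_j$, so the degree-$w_i$ part of $\R[y^1,\dots,y^N]$ is spanned by those monomials with $\langle w|k\rangle = w_i$. Hence each $\psi^i$ is exactly of the form $\sum_{\langle w|k\rangle = w_i}\za^i_k\,(y^1)^{k_1}\cdots(y^N)^{k_N}$ with $\za^i_k\in\R$, which is the asserted formula~(\ref{homizo}). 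This shows $\psi$ is polynomial; that it is a diffeomorphism is part of the definition of automorphism, so nothing more is needed there.

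The only genuinely substantive point — and the one I would state carefully — is the claim that $\psi$ being an automorphism forces it to be a \emph{morphism} in the sense of Theorem~2.1, i.e.\ that it intertwines the homotheties. By the definition of graded space given in the excerpt, an automorphism is by definition a morphism (an invertible one), so this is immediate; I would just make the remark explicit to avoid any ambiguity about what ``automorphism'' means here. Everything else is a formal reading-off of the polynomial form from Lemma~\ref{L:e}, with the bookkeeping of the weighted degree $\langle w|k\rangle$ matching the weight $w_i$ of the target coordinate. I do not anticipate any real obstacle; the result is essentially a corollary in the literal sense, the work having been done in Lemma~\ref{L:e} and Theorem~2.1.
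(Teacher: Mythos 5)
Your proposal is correct and is essentially the argument the paper intends: the corollary is stated without a separate proof precisely because it follows by combining Theorem~2.1(b) (an automorphism pulls back the degree-$w_i$ coordinate $y^i$ to a degree-$w_i$ homogeneous function) with Lemma~\ref{L:e} (such a function is a polynomial, and by the gradation remark preceding the lemma its monomials must satisfy $\langle w|k\rangle=w_i$). Nothing is missing.
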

Since the group $\GH(\dd,M)$ of all automorphisms of a graded space $M$ (the {\it general graded group}) is parametrized by a finite number
of coefficients $\za^i_k$ as above with clear smoothness of compositions, we get the following.
\begin{cor}\label{cor:group} The group $\GH(\dd,M)$ of all automorphisms of a graded space $M$ of rank $\dd$ is a Lie group.
\end{cor}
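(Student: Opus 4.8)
The plan is to combine Corollary~\ref{cor:auto}, which already gives an explicit polynomial description of the automorphisms, with the general principle that a group which can be faithfully parametrised by finitely many real coefficients in such a way that the group operations are polynomial (hence smooth) is a Lie group. Concretely, by Corollary~\ref{cor:auto} an automorphism $\psi\in\GH(\dd,M)$ is determined by the tuple of coefficients $(\za^i_k)$ appearing in \eqref{homizo}, where $i$ ranges over $1,\dots,N$ and $k=(k_1,\dots,k_N)$ ranges over the finite set of multi-indices with $\langle w|k\rangle=w_i$. So first I would fix homogeneous coordinates once and for all and let $\mathcal{P}$ denote the finite-dimensional real vector space of all such coefficient tuples; the assignment $\psi\mapsto(\za^i_k)$ is then an injection $\GH(\dd,M)\hookrightarrow\mathcal{P}$, and I would identify $\GH(\dd,M)$ with its image.

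Next I would check that this image is a locally closed (in fact, a Zariski-open) subset of $\mathcal{P}$. A tuple $(\za^i_k)$ corresponds to an automorphism iff the associated polynomial endomorphism $\psi$ is invertible with polynomial inverse; since $\psi$ necessarily fixes the origin and intertwines the homotheties, its linear part $D\psi(0)$ is block-diagonal with blocks acting on the coordinates of each fixed degree, and one shows $\psi$ is invertible as a graded endomorphism precisely when each of these blocks is invertible — a condition given by the non-vanishing of finitely many determinants, i.e. by finitely many polynomial inequalities in the $\za^i_k$. (That invertibility of the linear part suffices to invert $\psi$ in the polynomial category follows from a degree-by-degree triangular solving argument, using that the higher-degree corrections are determined by lower-degree data; this is where a short routine verification is needed but I would not grind through it.) Hence $\GH(\dd,M)$ is an open subset of an affine space, so a smooth manifold.

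Then I would verify that multiplication and inversion are smooth. Composition of two automorphisms, written in the form \eqref{homizo}, produces coefficients that are polynomial expressions in the coefficients of the two factors — one simply substitutes and collects terms of the appropriate weight — so $\GH(\dd,M)\times\GH(\dd,M)\to\GH(\dd,M)$ is a polynomial, hence smooth, map. For inversion, the triangular solving procedure used above expresses the coefficients of $\psi^{-1}$ as rational functions of the coefficients of $\psi$ whose denominators are exactly the non-vanishing block-determinants, so inversion is smooth on $\GH(\dd,M)$. Together with the manifold structure from the previous step, this exhibits $\GH(\dd,M)$ as a Lie group, completing the proof.

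The main obstacle is the second step: making precise that the image of $\GH(\dd,M)$ in $\mathcal{P}$ is exactly the locus cut out by the non-vanishing of the linear-part block determinants, i.e. that invertibility of the graded linear part is both necessary and sufficient for a weight-preserving polynomial map of the form \eqref{homizo} to be a graded-space automorphism, and that the inverse is again of that polynomial form. Everything else — the finiteness of the parametrisation, polynomiality of composition, smoothness of inversion — is then essentially bookkeeping built on Corollary~\ref{cor:auto} and Corollary~\ref{cor:group}'s observation that finitely many coefficients suffice.
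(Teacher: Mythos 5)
Your proposal is correct and follows essentially the same route as the paper, which justifies the corollary in a single sentence by observing that $\GH(\dd,M)$ is parametrized by the finitely many coefficients $\za^i_k$ of Corollary~\ref{cor:auto} with smooth (polynomial) composition. Your additional steps --- identifying the image in coefficient space as the open locus where the block determinants of the weight-graded linear part are nonzero, and inverting degree by degree to get smoothness of inversion --- correctly fill in the details the paper leaves implicit.
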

The group of automorphism of the canonical graded space $\R^\dd$ we shall denote $\GH(\R^\dd)$. Of
course, $\GH(\dd,M)\simeq\GH(\R^\dd)$. If $\dd=(N)$, then we deal with a vector space of dimension $N$ and
$\GH(\R^\dd)=\GL(N,\R)$.
\begin{example}
To describe the group $\GH(\R^\dd)$ for $\dd = (d_1,d_2)$, consider canonical coordinates
$(x^1, \ldots, x^{d_1}; y^1, \ldots, y^{d_2})$ of $\R^\dd$. Any automorphism of
$\R^\dd$ is described by constants $a_i^j, b^u_w, c_{ij}^u\in\R$ and has the form $(x^i, y^u) \mapsto ({\bar x^i},
{\bar y^u})$ with
$$
{\bar x^i}  =\sum_j a_j^i x^j, \quad {\bar y^u}  = \sum_u b^u_w y^w +  \sum_{ij} c_{ij}^u x^i x^j.
$$
The matrices $A=(a_j^i)$ and $B=(b^u_w)$ are invertible, while $c^u_{ij}$ are arbitrary. Let $V$ be the vector subspace
of $\Csi(\R^{d_1+d_2})$ spanned by the functions $x^i, y^w$ and $z^{ij} : =x^ix^j$. It is a finite-dimensional
subspace invariant with respect to the canonical action of $\GH(\R^\dd)$ and it gives rise to a faithful
representation of $\GH(\R^\dd)$. For instance, $\GH(\R^{(1, 1)})$ can be represented as the following matrix
subgroup
$$
\GH(\R^{(1, 1)}) \simeq  \left\{ \left(
\begin{array}{ccc}
a & 0 & 0 \\
0 & b & 0 \\
0 & c  & a^2
\end{array}
\right): a, b\neq 0 \right\}.
$$
\end{example}

\section{Graded bundles}
\begin{definition} %Let $V$ be a fixed graded space of rank $\dd = (d_1, \ldots, d_n)$.
A {\it graded bundle of degree $n$ and rank $\dd$} is a smooth fiber bundle $\pi:M\ra M_0$ with the
typical fiber $\R^N$, considered as the standard homogeneity  space $\R^\dd$ of degree $n$ and rank $\dd$, which
admits an atlas of local trivializations $\phi_U:\pi^{-1}(U)\ra U\times \R^\dd$ such that the change of local
trivializations $\phi_{U'}\circ\phi_U^{-1}:(U\cap U')\times \R^\dd\ra (U\cap U')\times \R^\dd$ is in each
fiber an automorphism of the standard homogeneity structure, i.e.,
$$\phi_{U'}\circ\phi_U^{-1}(x,y)=\left(x,g_{UU'}(x)(y)\right)$$
for some $\GH(\R^\dd)$-valued {\it transition functions} $g_{UU'}$.
\end{definition}

\medskip It is clear that each fiber of a graded bundle of degree $n$ and rank $\dd$ carries the structure of a graded space
of degree $n$ and rank $\dd$. Roughly speaking, a homogeneity  bundle
is a smooth family of homogeneity  spaces parametrized by a base manifold $M_0$. If $M_0$ is a single point,
then $M$ is just a homogeneity  space. It follows  that the algebra $\Csi(M)$ of smooth functions on $M$
contains a distinguished graded subalgebra
$$\As(M) = \bigoplus_{k=0}^\infty \As_k(M)
$$
of {\it polynomial functions} on $M$, which is locally generated by $\Csi(M_0)$ and homogeneous coordinates
$y^i$ of the typical fiber $\R^\dd$. The summand $\As_k=\As_k(M)$ is the space of functions of degree $k$. In
particular, $\As_0 = \Csi(M_0)$. Local coordinates in $M$ consisting of homogeneous local functions
establishing an isomorphism with $U\ti\R^\dd$ we shall call {\it homogeneous coordinates}. The systems of homogeneous coordinates are always of
the form $(x^a,y^i)$, where $(x^a)_1^m$ are local coordinates in $M_0$ and $(y^i)_1^N$ are homogeneous
coordinates in the typical fiber. We can unify the two kinds of coordinates using local coordinates
$(y^i)_1^{N+m}$, where $y^{N+a}=x^a$, and putting the degrees $w_j=0$ for $N<j\le N+m$. We will call
$w=(w_1,\dots,w_{N+m})$ the {\it weight vector} of the graded bundle.

{\it Morphisms} of graded bundles are defined in an obvious way: they are smooth maps intertwining the
semigroup $(\Real_{\ge 0}, \cdot)$ actions, or equivalently, respecting the homogeneity of functions together
with their degrees. Also the notion of a {\it graded subbundle} of a homogeneity  bundle $M$ is clear: it
is a submanifold  defined locally by the vanishing of some homogeneous coordinates. Any graded subbundle is
clearly a graded bundle itself and we can choose as its local homogeneous coordinates the remaining part
of (non-vanishing) homogeneous local coordinates in $M$. Let us remark that a graded bundle
can be regarded as a kind of graded manifold, since it has a distinguished gradation in a dense subsheaf of the structural sheaf.

\begin{example} Graded bundles of degree $1$ correspond exactly to vector bundles. Indeed, all the degrees
$w_i$ equal one and the gluing transformations have to be linear because they preserve the degree. Moreover,
there is a unique structure of a vector space on $\Real^N$ for which $y^i$ are 1-homogeneous, thus linear,
functions.
\end{example}

\begin{remark} We can define {\it homogeneity super-bundles of degree} $n$ completely analogously, assuming only that
$\As$ is locally the polynomial super-algebra $\Csi(U)[y^1, \ldots, y^N]$ in which the parity of $y^i$ agrees
with the parity of the degree $w_i$. This notion is equivalent to the notion of an {\it N-manifold} in the
terminology of {\v{S}}evera and Roytenberg \cite{Sev,Roy}. In this sense, the concept of a homogeneity
super-bundle of degree $n$ is the even counterpart of the notion of an N-manifold of degree $n$. In general,
however, there is no obvious `superization' procedure, analogous to $\Pi$, for homogeneity  bundles of degree
$n>1$, like in the case of  vector bundles.

For explanation, let us consider an (even) homogeneity  space $V = \R^\dd$ of rank $\dd  =(2,1)$ and an
automorphism $f$ of $V$ given in canonical coordinates $x_1, x_2, y$ of degrees $1,1,2$ respectively, by
$$
f(x_1, x_2, y) = (x_1, x_2, y+x_1^2).
$$
In coordinates $x_1' = (x_1+x_2)/2$, $x_2' = (x_1-x_2)/2$ and $y'=y$ we have
$$
f^*(y') = y + x_1^2 = y' + (x_1'+x_2')^2  = y' + x_1'^2 + x_2'^2 + 2\, x_1'\cdot x_2'.
$$
Let $\Pi V$ be an $N$-manifold of degree $2$ determined by the polynomial super-algebra $\R[\zvy_1, \zvy_2,
\zt]$ in which $\zvy_1, \zvy_2$ are odd coordinates of degree $1$ and $\zt$ is an even coordinate of degree
$2$. From the coordinate descriptions of $f$ we see that there is no obvious way of associating with $f$ an
automorphism of $\R[\zvy_1, \zvy_2, \zt]$. As homogeneity  bundles are obtained by gluing trivial parts of the
form $U\times \R^\dd$ by means of automorphisms of the standard homogeneity  space $\R^\dd$, there is no
obvious way of gluing parts of the form $U\times \Pi\R^\dd$, where $\Pi\R^\dd$ is the $N$-manifold defined by the polynomial super-algebra $\R[\zt_1, \ldots, \zt_n]$
in which the super-degree of $\zt_j$ coincides with the degree $w_i$ of the coordinate $y_j$ on $\R^\dd$.
Therefore, in general, there is no canonical nontrivial procedure for constructing a super-bundle from a given homogeneity  bundle. Note, however, that in some special
cases such a procedure does exist. For example, from  a given  double vector bundle we may obtain a $N$-manifold of
degree $2$ in a canonical way (\cite{GR}).
%%have obvious `superization' procedure $\Pi$ which associates an N-manifold $\Pi M$ of degree $n$ with a
%homogeneous super-bundle $M$ of degree $n$: we assume the parity of local coordinates coinciding with the
%parity of the weight; this agrees with gluing, as the weights are preserved. For vector bundles it reduces to
%the standard superization $E\mapsto \Pi E$.
\end{remark}

\bigskip
Recall that any real vector bundle of rank $n$ has its associated principal $\GL(n,\R)$-bundle, the {\it
frame bundle}, and {\it vice versa}: with any principal $\GL(n,\R)$-bundle a vector bundle of rank $n$ is
canonically associated. A similar procedure can be performed for graded bundles.

First, for a homogeneity  space $V$ of rank $\dd$ let us consider the space $F(V) = \text{Iso}(\R^\dd, V)$ of
isomorphisms $\zf:\R^\dd\ra V$ of graded spaces, i.e., diffeomorphisms $\zf:\R^N\ra V$ respecting the
actions of the semigroup $(\R_{\ge 0}, \cdot)$, where $N  = \sum_{i=1}^n d_i$, as usual. A choice of global
homogeneous coordinates on $V$ gives an identification of $F(V)$ with  $\GH(\dd, \R)$, which is a Lie group.
More generally, for a graded bundle $M$ of rank $\dd$ fibred over $M_0$, $\zp:M\to M_0$, the disjoint
union
$$
F(M) = \coprod_{p\in M_0} F(\zp^{-1}(p))
$$
has a natural smooth manifold structure and a smooth right action $F(M)\times \GH(\R^\dd)\to F(M)$ which turns
$F(M)$ into a principal $\GH(\R^\dd)$-bundle. We shall call $F(M)$ the {\it associated frame bundle}. One can
reconstruct the homogeneity  bundle structure from $F(M)$ in the standard way: given an arbitrary principal
$\GH(\R^\dd)$-bundle $(P, \zp, M_0)$ and a left $\GH(\R^\dd)$-action on a manifold $Q$ one defines a fiber
bundle $(P_Q, \zp_Q, M_0)$ with the typical fiber $Q$, where $P_Q = P\times_{\GH(\R^\dd)} Q$. By taking
$Q=\R^\dd$ and $P = F(M)$ we recover $M \simeq P_Q$.

\bigskip
By convention, a graded bundle $M$ of degree $n$ is also a graded bundle of degree $m$, for $m>n$. Let
$\As^k$ be the subalgebra of $\As=\As(M)$ generated by functions from $\As$ of degree $\le k$. We have the following
sequence of associative algebra inclusions
$$
\Csi(M_0) = \As^0 \hookrightarrow \As^1 \hookrightarrow \As^2 \hookrightarrow \ldots \hookrightarrow \As^n =
\As
$$
which gives rise to the sequence
\be{affseqn}
M_0 \leftarrow M_1 \leftarrow M_2 \leftarrow \ldots \leftarrow  M_n = M
\ee
of affine bundle projections.
%KOREKTA Here, $M_k$ is a graded subbundle of $M$ of degree $k$ defined by vanishing
%of all local coordinates of degrees $>k$.

Note also that, like for graded spaces, any  graded bundle of degree $n$ is canonically equipped
with an action $h:\Real_{\ge 0}\times M \to M$ of the multiplicative semigroup $(\Real_{\ge 0},\cdot)$ defined
locally in unified homogeneous coordinates by
\be{homact}
h_t(y^1, y^2, \ldots, y^{N+m}) = (t^{w_1}\, y^1, \ldots, t^{w_{N+m}}\, y^{N+m})=(t^{w_1}\, y^1, \ldots,
t^{w_{N}}\, y^N, y^{N+1}, \ldots, y^{N+m}),
\ee
where $h_t = h(t, \cdot)$ for $t\ge 0$. As transition functions preserve the degree, this action is globally well
defined. The maps $h_t$ we shall call {\it homotheties} of the graded bundle. The action by homotheties
on the graded bundle is determined by its restriction to the multiplicative one-parameter group of
positive reals and, in turn, by its infinitesimal generator: the {\it weight vector field} $\zD_M$ on $M$. In
homogeneous local coordinates $(y^1,\dots,y^{N+m})$ this vector field reads
\be{wvf}\zD_M=\sum_{j=1}^{N+m} w_j y^j\partial_{y^j}=\sum_{i=1}^N w_i y^i\partial_{y^i}\,.
\ee
Like in the case of graded spaces, the weight vector field is complete and, if $t\mapsto \exp{(t\zD_M)}$
is the flow of diffeomorphisms it generates, we have $\exp{(t\zD_M)}=h_{e^t}$ for all $t\in\R$. Thus, the
action by homotheties is completely determined by the weight vector field and {\it vice versa}. The weight
vector field, exactly like the action $h$ of $(\R_{\ge 0},\cdot)$ by homotheties, completely encodes the
homogeneity structure. Indeed, $h_0=\zp$ is the projection in the graded bundle and $\zD_M$, being
tangent to the fibers, defines on them the structures of graded spaces of rank $\dd$. We can formulate
these observations as follows.
\begin{theo}
A graded bundle $M$ of degree $n$ and the weight vector $w=(w_j)$, $w_j=0,1,\dots,n$, can be equivalently
defined as a manifold equipped with a complete vector field $\zD_M$ -- the weight vector field -- for which
there is an atlas $\{(W,y_W):W\in\cal W\}$ of coordinate charts, with $W$ being invariant with respect to
the flow of diffeomorphisms induced by $\zD_M$, such that $\zD_M$ takes the standard form (\ref{wvf}) in each
system of coordinates $(y^j)=(y^j_W)$, $W\in\cal W$. A smooth map between two graded bundles is a
graded bundle morphism if and only if it relates the corresponding weight vector fields.
\end{theo}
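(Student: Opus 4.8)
The plan is to prove the equivalence (and the accompanying statement about morphisms) by passing back and forth between the family of homotheties $h_t$ and its infinitesimal generator $\zD_M$, exactly as was done for graded spaces. In the converse direction I read the hypothesis as supplying \emph{homogeneous} coordinate charts, i.e.\ diffeomorphisms $y_W\colon W\to U_W\times\R^\dd$; this fullness of the charts is the natural reading, and, as the open-disc example of Section~2 makes clear, it cannot be weakened to ``$\zD_M$ is in standard form on some flow-invariant open set'', which by itself does not force completeness of the resulting action. With that understood, the ``only if'' direction is essentially contained in the paragraph preceding the theorem: a graded bundle $M$ carries the complete weight vector field $\zD_M$ whose flow is $\exp(t\zD_M)=h_{e^t}$, each homogeneous chart $\phi_U\colon\zp^{-1}(U)\to U\times\R^\dd$ is $h_t$-invariant (the homotheties move only the $\R^\dd$-factor) hence invariant under the flow of $\zD_M$, and in those coordinates $\zD_M$ has the form (\ref{wvf}).

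For the converse I would first set $h_t:=\exp\!\big((\ln t)\,\zD_M\big)$ for $t>0$; completeness of $\zD_M$ makes this a smooth action of $(\R_{>0},\cdot)$ with $h_1=id_M$. The crucial step is the extension to $t=0$. In a chart $W$, separating the coordinates $y^j$ into those $x^a$ of weight $0$ and those $y^i$ ($1\le i\le N$) of weight $w_i\ge 1$, the map $h_t$ reads $(x^a,y^i)\mapsto(x^a,t^{w_i}y^i)$, which extends smoothly to $t=0$ by $(x^a,y^i)\mapsto(x^a,0)$; since $y_W(W)=U_W\times\R^\dd$ the limit point again lies in $W$, and on an overlap $W\cap W'$ the two local formulae for $h_0$ coincide, because the coordinate change intertwines the local $h_t$'s for $t>0$ and hence their limits as $t\to 0^+$. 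Thus $h\colon\R_{\ge 0}\times M\to M$ is a globally defined smooth action of the monoid $(\R_{\ge 0},\cdot)$, and $h_0$ is a smooth retraction of $M$ onto the closed submanifold $M_0:=h_0(M)=\{\,p\in M:\zD_M(p)=0\,\}$ (locally $\{y^1=\dots=y^N=0\}$). I expect the verification that $h_0$ is well defined and smooth \emph{globally} to be the main obstacle; the rest is a local computation.

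Next I would recover the graded bundle structure. Put $\zp:=h_0\colon M\to M_0$, a surjective submersion trivialized over $U_W:=\zp(W)$ by $y_W$, and take the $(W,y_W)$ as homogeneous coordinate charts. A change of coordinates $y_{W'}\circ y_W^{-1}$ intertwines $h_t$ for all $t>0$, hence restricts on each fibre $\{x\}\times\R^\dd$ to a diffeomorphism of $\R^N$ intertwining the standard homotheties of $\R^\dd$; by Corollary~\ref{cor:auto} it is therefore an element of $\GH(\R^\dd)$, polynomial of the form (\ref{homizo}), with coefficients depending smoothly on $x$ (they are fibre derivatives at the origin). Hence the transition functions are $\GH(\R^\dd)$-valued and $M$, with projection $\zp$ onto $M_0$, is a graded bundle of the prescribed weight $w$.

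Finally, for the statement about morphisms: given a smooth $\psi\colon M\to M'$, if $\psi\circ h_t=h'_t\circ\psi$ for all $t>0$ then differentiating at $t=1$ yields $T\psi\circ\zD_M=\zD_{M'}\circ\psi$; conversely, if $\psi$ relates $\zD_M$ to $\zD_{M'}$ then uniqueness of integral curves forces $\psi\circ\exp(s\zD_M)=\exp(s\zD_{M'})\circ\psi$, i.e.\ $\psi\circ h_t=h'_t\circ\psi$ for all $t>0$. Passing to the limit $t\to 0^+$ (legitimate since $h_0,h'_0$ exist and $\psi$ is continuous) the identity then holds for all $t\ge 0$ as well. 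Since a morphism of graded bundles is by definition a smooth map intertwining the $(\R_{\ge 0},\cdot)$-actions, this shows $\psi$ is a graded bundle morphism if and only if it relates the weight vector fields, completing the proof.
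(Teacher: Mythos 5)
Your proof is correct and follows the same route the paper intends: the theorem is stated there without a separate proof, as a direct reformulation of the preceding observations that the homothety action $h$ and the complete weight vector field $\zD_M$ determine one another via $\exp(t\zD_M)=h_{e^t}$ and $h_0=\lim_{t\to 0^+}h_t=\zp$. Your fleshing-out of the converse — reading the charts as full trivializations $W\cong U_W\times\R^\dd$, checking that $h_0$ is globally well defined, and invoking Corollary~\ref{cor:auto} to see that the transition functions are $\GH(\R^\dd)$-valued with smoothly varying coefficients — supplies exactly the details the paper leaves implicit.
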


The intrinsic properties of the action $h$ ensuring that we deal with a graded bundle will be described
in the next section.

\begin{example}\label{e:2vb} Let $F$ be a double vector bundle with two vector bundle structures corresponding to homotheties $h_t^a$,
and the Euler vector fields $\zD^a$, $a=1,2$. According to \cite[Theorem 3.2]{GR}, there is an atlas for $F$
with local coordinates which are simultaneously homogeneous with respect to both Euler vector fields.
In other words, we can simultaneously write them in the form
$$\zD^a=\sum_{i=1}^Nw^a_iy^i\pa_{y^i}\,, \ a=1,2\,,$$
where $w^a_i=0,1$. It follows that $F$ carries a canonical structure of a graded bundle of degree 2 with
homotheties $h_t=h^1_t\circ h^2_t=h^2_t\circ h^1_t$ and the weight vector field $\zD_F=\zD^1+\zD^2$. In other
words, the coordinate $y^i$ is of degree 0,1, or 2, if, respectively, $y^i$ is of degree 0 with respect to
both, is of degree 0 with respect to one and degree 1 with respect to the other, and is of degree 1 with
respect to both Euler vector fields. In particular, for each manifold $M_0$, the bundles $TTM_0$ and
$T^*TM_0\simeq TT^*M_0$ are canonically graded bundles of degree 2 over $M_0$.

This picture can be easily generalized to a $n$-vector bundle which is canonically a graded bundle of
degree $n$ and whose weight vector field is the sum of the $n$ (commuting) Euler vector fields representing
$n$ compatible vector bundle structures.
\end{example}

\begin{example} Let $M_0$ be a smooth manifold of dimension $m$. Recall that two curves $\za$, $\zb: \Real\to
M_0$ have the same $r$-jet ($r\geq 0$) at $0\in\Real$, if for any smooth function $x$ on $M_0$, the difference
$x\circ\za - x\circ\zb$ vanishes at $0\in \Real$ up to the order $r$. In such a case we write $\za\sim_r\zb$,
and it is clear that $\sim_r$ is an equivalence relation. The coset of $\za$ with respect to this equivalence
is called the $r$-jet of $\za$ and will be denoted by $\jet{\za}_r$. The space of all $r$-jets will be denoted
by $\jetbndl{r}_0$. It is a bundle over $M_0$ with the projection $\pi(\jet{\za}_r)= \za(0)=[\za]_0$. For
example, $\jetb{1}{M_0} \simeq T M_0 $ is the tangent bundle of $M_0$. The subset of $r$-jets at $\zvy\in M_0$
will be denoted by $\jetspacen{r}{\zvy}{M_0}$. In the literature, $\jetbndl{r}_0$ is  called {\it the $r$th
tangent bundle}. The elements of $\jetbndl{r}_0$ are also called {\it velocities of order } $r$ on $M_0$.

Let $(x_a)$ be local coordinates around $\zvy\in M_0$ and let $\jet{\za}_r\in \jetspacen{r}{\zvy}{M_0}$. Let
us denote the Taylor coefficients of $x_a(\za(\cdot))$ in the following way
$$
x_a(\za(t)) = x_a(\zvy) + t\cdot \dot{x}_a(\jet{\za}_r) + \frac{t^2}{2!}  \ddot{x}_a(\jet{\za}_r) + \ldots +
\frac{t^r}{r!}  x^{(r)}_a(\jet{\za}_r) + o(t^r).
$$
Thus $(x_a, \dot{x}_b, \ddot{x}_c, \ldots)$ form the so called {\it adapted coordinate system} in
$\jetbndl{r}_0$. % and $x_a^{(i)}$ has degree $i$.
We shall write $[\za]_r\sim 0$ and say that $[\za]_r$ {\it vanishes}, if $x_a^{(i)}([\za]_r)=0$ for
$i=1,\dots,r$ and all $a$ -- this does not depend on the choice of local coordinates $(x_a)$. By convention,
$[\za]_0\sim 0$ for all $\za$.

One can prove easily that $\jetbndl{r}_0$ is a  graded bundle of degree $r$ and rank $(m,\dots,m)$. The
canonical action of $(\Real, \cdot)$ on $\jetbndl{r}_0$ is given by
\be{e:actJet}
t. \jet{\za}_r := \jet{t.\za}_r,
\ee
where $(t.\za)(s) = \za(ts)$. In coordinates, $x_a^{(k)}(t. \jet{\za}_r) = t^k \cdot x_a^{(k)}(\jet{\za}_r)$;
hence $x_a^{(k)}$ has degree $k$. The sequence (\ref{affseqn}) reads
%\be{e:qmap}
$$
\xymatrix{ \jetbndl{n}_0 \ar[r]^{q_n} & \ldots  \ar[r]^{q_3} &  \jetbndl{2}_0 \ar[r]^{q_2} & TM_0 \ar[r]^{q_1}
& M_0, }
$$
%\ee
where $q_i(\jet{\za}_i) = \jet{\za}_{i-1}$ for $i=1, 2, \ldots, n$. There are also  canonical inclusions
%\be{e:imap}
$$
\zi_k: TM_0 \hookrightarrow \jetbndl{k}_0,
$$
%\ee
defined by $\zi_k(\jet{\za}_1) = \jet{\tilde{\za}}_k$, where $\tilde{\za}(t) = \za(\frac{1}{k!} t^k)$,
$k\in\N$. We have
\be{e:TMjet}
\zi_k(t^k\cdot v) = t.\zi_k(v),
\ee
for $v\in TM_0$. Indeed, if $v = \jet{\za}_1$, then $\zi_k(t^k\cdot v)$ is the class of the curve $s\mapsto
(t^k.\za)(\frac{1}{k!} s^k) = \za(\frac{1}{k!} s^kt^k)$, and $t.\zi_k(v) = \jet{t.\tilde{\za}}_k$,
$(t.\tilde{\za})(s) = \tilde{\za}(st) = \za(\frac{1}{k!} s^k t^k)$. In coordinates, for $r\geq 1$,
$x_b^{(r)}(\zi_k(\frac{\partial}{\partial x_a}))$ is $1$, when $r=k$ and $a=b$, and zero otherwise. Note
that, if $\jet{\za}_{k-1} \in\jetbndl{k-1}_0$ vanishes, then there is a well-defined iterated differential
\be{e:ziTM}
 v := \left. \frac{d^k}{dt^k}\right| _{t=0} \za  =
 \sum_a x_a^{(k)}(\jet{\za}_k) \, \frac{\partial}{\partial x_a} \in TM_0,
\ee
hence $\zi_k(v) = \jet{\za}_k$.
\end{example}

\begin{theo}
Any smooth map $\Phi: M_0\ra N_0$ between manifolds $M_0$ and $N_0$ induces a map $T^r\Phi:T^rM_0\ra T^rN_0$,
defined by $T^r\Phi([\za]_r)=[\Phi\circ\za]_r$, which is a graded bundle morphism.
\end{theo}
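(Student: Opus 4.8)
The plan is to verify the three conditions required of a morphism of graded bundles: that the assignment $[\za]_r\mapsto[\Phi\circ\za]_r$ does not depend on the representative curve $\za$, that the resulting map $T^r\Phi$ is smooth, and that it intertwines the canonical $(\Real_{\ge 0},\cdot)$-actions (\ref{e:actJet}) on $T^rM_0$ and $T^rN_0$.

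First I would settle well-definedness, which is immediate from the definition of $\sim_r$. If $\za\sim_r\zb$ in $M_0$, then for every smooth function $y$ on $N_0$ the composition $y\circ\Phi$ is a smooth function on $M_0$, so $y\circ(\Phi\circ\za)-y\circ(\Phi\circ\zb)=(y\circ\Phi)\circ\za-(y\circ\Phi)\circ\zb$ vanishes at $0$ up to order $r$; hence $\Phi\circ\za\sim_r\Phi\circ\zb$. The same remark shows that $T^r\Phi$ covers $\Phi$, sending $\za(0)$ to $\Phi(\za(0))$ on the base.

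Next, smoothness, which I would check in adapted coordinates. Choose coordinates $(x_a)$ on an open set $U\subseteq M_0$ and $(y_b)$ on $V\subseteq N_0$ with $\Phi(U)\subseteq V$, and write $\Phi$ locally as $y_b=\Phi_b(x_1,\dots,x_m)$. For a curve $\za$ in $U$, the Fa\`a di Bruno formula applied to $y_b\circ\Phi\circ\za$ expresses each Taylor coefficient $y_b^{(k)}(T^r\Phi([\za]_r))$, $0\le k\le r$, as a universal polynomial in the adapted fiber coordinates $\dot x_a([\za]_r),\dots,x_a^{(k)}([\za]_r)$, whose coefficients are partial derivatives of the $\Phi_b$ evaluated at the base point $x(\za(0))$. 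Since these coefficients depend smoothly on the base point and the expression is polynomial in the fiber coordinates, $T^r\Phi$ is smooth on the adapted chart over $U$, and such charts cover $T^rM_0$. (Equivalently, in these charts $T^r\Phi$ is just the composition of Taylor polynomials truncated at order $r$, which is manifestly smooth.)

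Finally, the intertwining property is formal. By (\ref{e:actJet}) we have $t.[\za]_r=[t.\za]_r$ with $(t.\za)(s)=\za(ts)$, and clearly $\Phi\circ(t.\za)=t.(\Phi\circ\za)$ as curves in $N_0$; hence
$$T^r\Phi(t.[\za]_r)=[\Phi\circ(t.\za)]_r=[t.(\Phi\circ\za)]_r=t.[\Phi\circ\za]_r=t.\,T^r\Phi([\za]_r)$$
for every $t>0$, while the case $t=0$ follows by continuity (equivalently, because $h_0$ is the bundle projection and $T^r\Phi$ covers $\Phi$). Thus $T^r\Phi$ is a smooth map intertwining the homotheties, i.e.\ a graded bundle morphism. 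The only step that is not purely formal is the smoothness verification, and even there the combinatorial bookkeeping can be bypassed by the truncated-composition remark, so I do not expect any genuine obstacle.
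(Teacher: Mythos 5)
Your proposal is correct and follows essentially the same route as the paper: a chain-rule (Fa\`a di Bruno) computation in adapted coordinates to get smoothness and the polynomial, degree-preserving form of the transition, together with the formal identity $\Phi\circ(t.\za)=t.(\Phi\circ\za)$, which is exactly the identity the paper invokes for the general case. Your write-up is in fact slightly more complete than the paper's (which only displays the $r=2$ coordinate formulas); note only that the $t=0$ case needs no continuity argument, since $(0.\za)(s)=\za(0)$ gives $\Phi\circ(0.\za)=0.(\Phi\circ\za)$ directly.
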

\bepf
The proof consists of direct calculations using the rule of differentiating compositions and products of
functions. To be instructive, let us start with the case $r=2$. In local coordinates $(x_a')$ in $N_0$ and
$(x_b)$ in $M_0$, we get
\bea%\label{coord}
x_a'=\Phi_a(x),\nn \\
\dot{x}_a' = \sum_b \frac{\partial \Phi_a}{\partial x_b}(x) \, \dot{x}_b,\nn \\
\ddot{x}_a' = \sum_b \frac{\partial \Phi_a}{\partial x_b}(x) \,\ddot{x}_b + \frac{1}{2} \sum_{b,c}
\frac{\partial^2 \Phi_a}{\partial x_b\partial x_c}(x)\, \dot{x}_b \dot{x}_c\,.\nn
\eea
It is easy to see that the above transformations preserve the degree. The general case follows directly from
the identity
$$(T^r\Phi)(t.[\za]_r)=t.T^r\Phi([\za]_r)\,.$$

\epf

\noindent More information about the $r$th tangent bundle $T^rM_0$ and general jet bundles can be found, for
instance, in \cite{KMS,dLR,Sau,YI}.

\section{Homogeneity structures}
The graded bundles have been defined in the preceding paragraph as glued from the standard ones of the
form $U\ti\R^\dd$. Here, we will give an intrinsic characterization of graded bundles which is easily
verifiable and useful for applications.

Let $h:[0,+\infty)\ti M\ra M$ be a smooth action of the multiplicative semigroup $(\Real_{\geq0}, \cdot)$ of non-negative
reals on a manifold $M$. `Smooth' means that the map $h$ can be extended to a smooth map on
$(-\ze,+\infty)\ti M$ for some $\ze>0$. We shall also speak about {\it local actions}. They are smooth maps
$h:[0,\ze)\ti M\ra M$ satisfying $h_t\circ h_s=h_{ts}$ whenever $h_t=h(t,\cdot)$, $h_s=h(s,\cdot)$, and $h_{ts}$ make sense, i.e., $0\le t,s,ts<\ze$.

It is explained in \cite{GR} that, under an additional condition, $h$ determines a vector bundle structure on
$M$ for which $h$ coincides with the action by homotheties in this vector bundle. We are going to generalize
this result and give a sufficient and necessary condition for $h$ ensuring that there exist a structure of a
graded bundle of degree $n$ on $M$ whose homotheties coincide with $h_t$, $t\ge 0$.

\begin{definition} A {\it homogeneity structure} of degree $n$  on a smooth manifold $M$ is a
smooth action
$$
h:[0, \infty) \times M \to M
$$
of the multiplicative semigroup $(\Real_{\geq 0}, \cdot)$ such that, for $\zvf_n: M \to \jetbndln$ defined by
\be{e:defzvf}
\zvf_n(p) = \jet{\za_p}_n,
\ee
with $\za_p(t) = h(t, p)$, the following condition is satisfied:
\be{cond}
\zvf_n(p)\ \text{vanishes if and only if }\ p\in M_0 := h_0(M)\,.
\ee
\end{definition}

\medskip\noindent
When $n$ is fixed, we shall also write simply $\zvf$ for $\zvf_n$. We shall also use the notation $ t.p$
for the action by homotheties in $T^nM$. Note that the only difference between
the homogeneous structures considered in \cite{GR} and the homogeneity structures of degree $n$ here is in the
order of derivatives of $\za_p$ that can vanish only on $M_0$. This order is here an arbitrary $n=1,2,\dots,$
instead of just 1.

We do not assume that we deal with an action of the multiplicative $\R$, as a natural extension to $\R$ will follow automatically.
We do not assume also here that $h(1,p)=p\ $ for $p\in M$, so we use the
structure of the semigroup rather than the monoid structure on $(\Real_{\geq0}, \cdot)$. However, it turns
out that $h_1=id_M$ automatically, for any $n$-homogeneity structure.
\begin{prop} If $h:[0, \infty) \times M \to M$ is a smooth action of the multiplicative semigroup $(\Real_{\geq 0}, \cdot)$, then $h_1$ is a smooth submersion onto the submanifold $M_1=h_1(M)$, and $h$ reduced to $M_1$ is a smooth action of the multiplicative monoid $(\Real_{\geq0}, \cdot)$, i.e. $(h_1)_{|M_1}=id$. In particular, any homogeneity structure of degree $n$ on $M$ is actually the monoid action, $h_1=id$.
\end{prop}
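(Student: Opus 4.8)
The plan is to reduce the first two assertions to the fact that $h_1$ is an idempotent, $h_1\circ h_1=h_{1\cdot 1}=h_1$, and the last one to combining this with the jet condition (\ref{cond}).

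For the general statement I would argue as follows. Being idempotent, $h_1$ has image $M_1=h_1(M)$ equal to its fixed-point set $\{p\in M:h_1(p)=p\}$, which is closed since it is the preimage of the diagonal under $(\operatorname{id},h_1)$. Then $M_1$ is an embedded submanifold and $h_1$ a submersion onto it by the standard normal-form argument for smooth retractions: at a fixed point $q$ the differential $P=dh_{1,q}$ is a linear projection, and two successive coordinate changes exploiting idempotency (first promoting the $\operatorname{im}P$-component of $h_1$ to a new coordinate, in which idempotency turns $h_1$ into $(u,w)\mapsto(u,B(u,w))$, then replacing $w$ by $w-B(u,w)$) bring $h_1$ to the model $(u,w)\mapsto(u,0)$, so that locally $M_1=\{w=0\}$ and $h_1$ is the coordinate projection. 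Invariance of $M_1$ under the action is immediate, since for $q\in M_1$ we get $h_1(h_t(q))=h_t(h_1(q))=h_t(q)$, so $h_t(q)\in M_1$; and as $M_1$ is exactly the fixed-point set of $h_1$, the restriction $(h_1)_{|M_1}$ is the identity, so $h$ restricted to $[0,\infty)\times M_1$ is an action of the monoid $(\Real_{\ge 0},\cdot)$.

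For the remaining assertion, assume (\ref{cond}). The key remark is that $\za_p=\za_{h_1(p)}$ as curves, because $h_t(p)=h_{t\cdot 1}(p)=h_t(h_1(p))$; hence $\zvf_n(p)=\zvf_n(h_1(p))$, i.e.\ $\zvf_n$ factors through $h_1$. Using this I would show that the fibres of $h_1$ over $M_0$ are points: if $h_1(p)=q\in M_0$, then $\za_p(t)=h_t(q)=h_t(h_0(q))=h_0(q)=q$ is the constant curve, so $\zvf_n(p)$ vanishes, (\ref{cond}) forces $p\in M_0$, and then $p=h_0(p)=h_0(h_1(p))=h_0(q)=q$. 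Consequently, for $q\in M_0$ the map $dh_{1,q}\colon T_qM\to T_qM_1$ is surjective with zero kernel, hence an isomorphism, so the inverse function theorem makes $h_1$ a local diffeomorphism at $q$, which shows $M_1$ contains an open neighbourhood of each point of $M_0$; in particular $\operatorname{int}M_1\supseteq M_0\neq\varnothing$. Since $M_1$ is also closed, one concludes $M_1=M$ by checking that $\operatorname{int}M_1$ is closed as well: a boundary point $p$ of $\operatorname{int}M_1$ lies in $M_1$ and is a limit of points at which $M_1$ is of full dimension, so $\dim_pM_1=\dim M$ by local constancy of $\dim M_1$ along $M_1$, forcing $p\in\operatorname{int}M_1$. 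Thus $h_1=\operatorname{id}$ (on a disconnected $M$ one runs this on each component).

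The routine pieces are the local normal form and the bookkeeping with $h_th_s=h_{ts}$. The step I expect to be the genuine obstacle is the last one — globalizing from ``$M_1$ open near $M_0$'' to ``$M_1=M$'' — together with the matching issue of globalizing the submersion property of $h_1$: both require controlling the fibres of $h_1$ away from $M_1$ and $M_0$, and it is here that one must really use (\ref{cond}), which pins down these fibres over $M_0$, rather than the bare semigroup axioms.
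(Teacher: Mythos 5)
Your argument follows the same route as the paper's for the first two assertions: they are exactly the content of the retraction lemma \cite[Theorem~1.13]{KMS}, which the paper simply cites and which you reprove inline via the standard normal form for a smooth idempotent, combined with the identities $h_1\circ h_1=h_1$ and $h_t\circ h_1=h_1\circ h_t=h_t$. (Note that this normal form only gives the submersion property at, hence near, points of $M_1$; for a general idempotent the rank of $h_1$ can drop away from $M_1$, so the global ``submersion'' claim should be read in that local sense.) Where you add genuine content is the last assertion: the paper disposes of it with the single sentence that (\ref{cond}) ensures $M_1=M$, whereas you supply an actual mechanism --- $\za_p=\za_{h_1(p)}$, so by (\ref{cond}) the fibre of $h_1$ over any point of $M_0$ is a singleton, whence $h_1$ is a local diffeomorphism along $M_0$, $M_1$ has nonempty interior, and the interior of the closed set $M_1$ is clopen. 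All of this is correct.

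The one step that fails is your closing parenthesis, ``on a disconnected $M$ one runs this on each component'': a component of $M$ need not meet $M_0$, and on such a component your clopen argument has no starting point. In fact the statement itself is false without connectedness. Take $M=\{0,1\}\times\Real$ with $h_t(0,y)=(0,ty)$ and $h_t(1,x)=(0,t)$; this is a smooth semigroup action, $M_0=\{(0,0)\}$, and the $1$-jet of $\za_p$ at $0$ vanishes exactly for $p\in M_0$, so (\ref{cond}) holds with $n=1$ --- yet $h_1(1,x)=(0,1)\neq(1,x)$, and $h_1$ is not even a submersion onto $M_1=\{0\}\times\Real$ along the second component. So connectedness of $M$, which the paper makes explicit only in the subsequent theorem, is genuinely needed here; your proof is complete precisely in that case, and no proof can cover the disconnected one.
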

\begin{proof} Since $h_1$ is a projection onto $M_1=h_1(M)$, $h_1\circ h_1=h_1$, then $M_1$ is a smooth manifold and $h_1$ is a submersion onto $M_1$ according to \cite[Theorem~1.13]{KMS}. Since $h_t\circ h_1=h_1\circ h_t=h_t$, the action restricts to $M_1$ on which $h_1$ is the identity. The nondegeneracy assumption (\ref{cond}) ensures that $M_1=M$.
\end{proof}

Thus, the nondegeneracy condition implies that we deal with a monoid action. We will see that the converse is also true. It is based on the following observation.
%begin{KOREKTA}
\begin{prop}\label{p:flat}
Let $h: [0, \infty) \times M \to M$  be an arbitrary smooth action
of the semigroup $(\Real_{\geq 0}, \cdot)$ and assume that, for some point $p$,
the curve $\za_p(t) = h_t(p)$ is not constant.
Then, there exists an integer $n$ such that the $n$-th jet of $\za_p$ at $t=0$ does not vanish.
\end{prop}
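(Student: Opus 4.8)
The plan is to argue by contraposition: assuming that the $n$-th jet of $\za_p$ at $0$ vanishes for \emph{every} $n$, I will show that $\za_p$ is constant. Write $c(t)=h(t,p)$ and $q=h_0(p)$. First I would extract two elementary consequences of the semigroup law $h_t\circ h_s=h_{ts}$. From $h_s\circ h_0=h_0$ it follows that $q$ is a fixed point of every $h_s$, so the (locally defined) vector field $\zD(y)=\left.\frac{d}{ds}\right|_{s=1}h_s(y)=\partial_t h(1,y)$ satisfies $\zD(q)=0$; and differentiating the identity $h(s,h_t(p))=h(st,p)$ in $s$ at $s=1$ produces the singular ordinary differential equation
$$
\dot c(t)=\tfrac1t\,\zD(c(t)),\qquad t>0.
$$
I would also record the remark that if $c(s)=q$ for even one $s>0$, then $c(t)=h_{t/s}(h_s(p))=h_{t/s}(q)=q$ for all $t\ge 0$, so $c$ is constant; this will finish the proof once I know that $c$ takes the value $q$ somewhere on $(0,\infty)$.

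Next I would localise. Since $h$ extends smoothly across $t=0$, the curve $c$ is smooth on $[0,\infty)$ and $c(t)\to q$ as $t\to 0^+$, so I can pick local coordinates centred at $q$ and a $\zd>0$ with $c\big((0,\zd)\big)$ contained in a convex coordinate ball $B$ on which $\zD$ is defined. In these coordinates the hypothesis — all jets of $\za_p$ vanish at $0$ — says precisely that every derivative of $c$ of order $\ge 1$ vanishes at $0$, hence $c(t)=o(t^N)$ as $t\to 0^+$ for every $N$. Because $\zD$ is of class $C^1$ with $\zD(q)=0$ (the origin), there is a constant $C$ with $|\zD(y)|\le C|y|$ for all $y\in B$.

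The heart of the argument is then a Gr\"onwall-type comparison run \emph{towards} $t=0$. With $u(t)=|c(t)|^2$, the differential equation gives $u'(t)=\frac2t\,\la c(t),\zD(c(t))\ran\le\frac{2C}{t}u(t)$, so $t\mapsto t^{-2C}u(t)$ is non-increasing on $(0,\zd)$. Fixing $t_0\in(0,\zd)$ and letting $t_1\to0^+$ yields $u(t_0)\le t_0^{2C}\,t_1^{-2C}u(t_1)$, and taking $N>C$ in the flatness estimate forces $t_1^{-2C}u(t_1)=o(t_1^{2N-2C})\to0$, so $u(t_0)=0$. Thus $c\equiv q$ on $(0,\zd)$, and by the remark above $\za_p=c$ is constant, contradicting the hypothesis. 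I expect the main obstacle to be legitimising this backward Gr\"onwall step: one must be certain the solution stays in the ball $B$ where the linear bound on $\zD$ is valid, which is exactly why $\zd$ is chosen with $c((0,\zd))\subset B$; equivalently, one substitutes $t=e^{\zt}$ and views $c(e^{\zt})$ as a genuine integral curve of $\zD$ on $(-\infty,\log\zd)$ that tends to $q$ faster than any exponential as $\zt\to-\infty$, and the Gr\"onwall estimate then forces it to be constantly $q$.
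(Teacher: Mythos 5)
Your proof is correct, but it takes a genuinely different route from the paper's. Both arguments assume the curve is flat at $t=0$ and deduce that it is constant, but the paper first reduces to dimension one by observing that the orbit $\za_p([0,\infty))$ is a submanifold with boundary diffeomorphic to $[0,\infty)$; writing $f(t)=h_t(p_0)$ on $[0,\infty)\subset\R$ with $f$ flat at $0$, it takes the minimal elements $t_n$ of the sets $A_n=\{t>0:\ f(t)\ge t^n\}$ and obtains the contradiction $f(t_n)/f(t_n/2)>2^n\to\infty$ from the mere existence of the partial derivative $\tfrac{\partial h}{\partial x}(2,0)$. You stay in arbitrary dimension and instead differentiate the semigroup law to get the singular Euler-type equation $t\,\dot c(t)=\zD(c(t))$, combine the flatness estimate $|c(t)|=o(t^N)$ with the linear bound $|\zD(y)|\le C|y|$ near the fixed point $q=h_0(p)$, and run a backward Gr\"onwall comparison to force $c\equiv q$ on an interval, after which the semigroup law propagates constancy to all of $[0,\infty)$. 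Your approach buys independence from the (not entirely obvious) claim that the orbit is a one-dimensional submanifold with boundary, at the cost of the ODE machinery; you also correctly handle the one delicate point on your side, namely that $\zD(y)=\partial_s h(s,y)|_{s=1}$ is a priori a vector at $h_1(y)$ rather than at $y$, which is harmless here because every point $h_t(p)$ of the orbit is fixed by $h_1$ and the estimate is only needed in coordinates. The paper's argument is the more elementary of the two, using only differentiability of $h$ in its second variable at a single point.
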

\begin{proof}
It is easy to see that the image $\za_p([0, \infty))$ is a submanifold with boundary of $M$,  diffeomorphic to $[0, \infty)$, so
the general case reduces essentially  to the case $M=\R$. In fact, we can work with the subset $[0, \infty)\subset\R$. Let us assume that $0\neq p_0 \in [0, \infty)$
and $f(t)=h_t(p_0)$ is flat at $0$. Let as consider the sets $A_n = \{t>0: f(t)\geq t^n\}$, $n\in \N$. There exists $n_0$ such that,
for $n\geq n_0$, $A_n$ is not empty. Since $f$ is flat, $A_n$ is bounded from below by a positive number, and is closed. Therefore
$A_n$ has a minimal element, say $t_n$. Hence, $f(t_n)= t_n^n$ and $f(t)<t^n$ for $t\in (0, t_n)$. Of course, $\lim_{n\to \infty} t_n = 0$.
The partial derivative
of $h(t, x)$ with respect to $x$ at the point $(2, 0)$ is $\frac{\partial h}{\partial x} (2, 0) = \lim_{x\to 0 } \frac{h_2(x)-h_2(0)}{x}$.
Substituting $x:= h(t_n/2, p_0)$, we get
$$\frac{h_2(h_{t_n/2}(p_0))-h_2(0)}{h_{t_n/2}(p_0)} = \frac{f(t_n)}{f(t_n/2)}
> \frac{t_n^n}{(t_n/2)^n} = 2^n\to + \infty\,;$$
a contradiction.
\end{proof}
%end{KOREKTA}

\medskip
An important observation used in the sequel is that, for any smooth action $h$ of the semigroup $(\Real_{\geq0}, \cdot)$, we have
\be{e:l1}
t.\zvf_r(p) = \zvf_r(h_t(p)).
\ee
Indeed, $t.\zvf_r(p) = t. \jet{\za_p}_r = \jet{t.\za_p}_r$, and $(t.\za_p)(s) := \za_p(ts) = h(ts,p) =
h_s(h_t(p))$, so $\jet{t.\za_p}_r = \zvf_r(h_t(p))$, and (\ref{e:l1}) follows.

\begin{lem}\label{L} Let $h:[0,\ze)\times M \to M$, $\ze>0$, be a local smooth action of the semigroup
$(\Real_{\geq0}, \cdot)$ such that $h_0(M) = \{\zvy \}$ is a single point. Define $\za_p(t) = h(t,p)$, and let
$r=1,2,\dots$. Let us assume that  $\jet{\za_p}_{r-1}\sim 0$ for any $p\in M$, so that the map $\zc_r: M\to
T_\zvy M$,
\be{e:defzc}
\zc_r(p) := \kdiff{r} h(t, p)\,,
\ee
is well defined. Then,
\be{e:psi1}
\zc_r(h_t(p)) = t^r\cdot \zc_r(p)
\ee
for all $p\in M$, and the differential $Q_r=\differ_\zvy\zc_r:T_\zvy M\ra T_\zvy M$ is a projection onto a
subspace $E\subset T_\zvy M$ containing the image of $\psi_r$:
\be{e:psi2}
(\differ_\zvy\zc_r)(\zc_r(p)) = \zc_r(p).
\ee
If, in addition, $\zc_r(p) = 0$ only for $p=\zvy$, then $\differ_\zvy\zc_r$ is the identity map on $T_\zvy M$.
\end{lem}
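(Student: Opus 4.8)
The plan is to establish the three claims in turn, using throughout the identity (\ref{e:l1}) and Taylor expansion at the fixed point $\zvy$. Note first that $\za_\zvy$ is constant: writing $\zvy=h_0(q)$ for arbitrary $q$ gives $h_t(\zvy)=h_t(h_0(q))=h_0(q)=\zvy$, so $\zc_r(\zvy)=0$ and $T_\zvy\zc_r$ is genuinely an endomorphism of $T_\zvy M$. For (\ref{e:psi1}) I would work in adapted coordinates $(x_a,\dot x_a,\dots,x_a^{(r)})$ on $T^rM$ over $\zvy$: the hypothesis $\jet{\za_q}_{r-1}\sim 0$ for all $q$ says exactly that in $\zvf_r(q)=\jet{\za_q}_r$ all coefficients of order $\le r-1$ vanish while the top one is a fixed multiple of $\zc_r(q)$; since $t.(\cdot)$ rescales the order-$k$ coefficient by $t^k$, comparing the order-$r$ coefficients on the two sides of $t.\zvf_r(p)=\zvf_r(h_t(p))$ gives $\zc_r(h_t(p))=t^r\zc_r(p)$. (Alternatively, in coordinates on $M$ centred at $\zvy$ one equates the $s^r$-coefficients of $x_a(h_s(h_t(p)))=x_a(h_{st}(p))$, both sides vanishing to order $r$ at $s=0$.)

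For (\ref{e:psi2}) I would combine (\ref{e:psi1}) with two Taylor expansions. First, since $x_a(h_t(p))$ is smooth in $(t,p)$, vanishes at $t=0$, and has its first $r-1$ $t$-derivatives at $t=0$ vanishing identically in $p$ (this is precisely $\jet{\za_p}_{r-1}\sim0$), Taylor's formula with parametrized integral remainder writes $h_t(p)=t^r\,\wt c(t,p)$ in these coordinates, with $\wt c$ smooth and $\wt c(0,p)$ the coordinate vector of $\zc_r(p)$. Secondly, $\zc_r(x)=Q_rx+O(|x|^2)$ near $\zvy$. Substituting and using (\ref{e:psi1}),
\[
t^r\zc_r(p)=\zc_r(h_t(p))=\zc_r\big(t^r\wt c(t,p)\big)=t^rQ_r\big(\wt c(t,p)\big)+O(t^{2r});
\]
dividing by $t^r$ and letting $t\to0^+$ yields $Q_r(\zc_r(p))=\zc_r(p)$ for all $p$. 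Hence $\mathrm{im}\,\zc_r\subseteq E:=\mathrm{span}(\mathrm{im}\,\zc_r)$, on which $Q_r$ acts as the identity; and since $Q_r=T_\zvy\zc_r$ has image in $E$ (each $Q_rv$ being a limit of scalar multiples of points of $\mathrm{im}\,\zc_r$), it follows that $\mathrm{im}\,Q_r=E$ and $Q_r^2=Q_r$, i.e.\ $Q_r$ is the projection onto $E\supseteq\mathrm{im}\,\zc_r$.

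Finally, under the extra hypothesis I would view $\zc_r$, via (\ref{e:psi2}), as a smooth map $M\to E$; its differential at $\zvy$ is $Q_r$ regarded as a map onto $E$, hence surjective, so $\zc_r$ is a submersion at $\zvy$. Then $\zc_r^{-1}(\zvy)=\zc_r^{-1}(0)$ is, near $\zvy$, a submanifold of $M$ of dimension $\dim M-\dim E$; since this set equals $\{\zvy\}$ by hypothesis, $\dim E=\dim M$, so $E=T_\zvy M$ and $Q_r=\mathrm{id}$. The only genuinely delicate step is the parametrized division by $t^r$ producing $\wt c(t,p)$, which is legitimate because the low-order jet coefficients of $\za_p$ vanish uniformly in $p$; the remaining manipulations are routine bookkeeping with Taylor expansions.
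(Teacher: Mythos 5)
Your proof is correct, and for most of the lemma it follows the same route as the paper: you obtain (\ref{e:psi1}) from the relation $t.\zvf_r(p)=\zvf_r(h_t(p))$ by comparing top-order jet coefficients, which is exactly the paper's argument via the injection $\zi_r$ and (\ref{e:zizc}); your Hadamard-type division $h_t(p)=t^r\,\wt c(t,p)$ is simply an explicit rendering of the paper's ``apply $\kdiff{r}$ to (\ref{e:psi1})'' step yielding (\ref{e:psi2}); and the final submersion/Implicit Function Theorem argument for $E=T_\zvy M$ is identical. Where you genuinely diverge is the idempotency of $Q_r=\differ_\zvy\zc_r$. The paper differentiates the semigroup law at $\zvy$ to get $H(ts)=H(t)\circ H(s)$ for $H(t)=\differ_\zvy h_t$, identifies $Q_r=\kdiff{r}H(t)$ via Schwarz's theorem on mixed partials, and then applies $\kdiff{r}$ first in $t$ (giving $s^rQ_r=Q_r\circ H(s)=H(s)\circ Q_r$) and then in $s$ to conclude $Q_r\circ Q_r=Q_r$. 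You instead put $E=\operatorname{span}(\operatorname{im}\zc_r)$, observe that (\ref{e:psi2}) forces $Q_r$ to act as the identity on $E$, and check $\operatorname{im}Q_r\subseteq E$ because each $Q_rv$ is a limit of scalar multiples of values of $\zc_r$ and $E$ is a closed linear subspace; idempotency and $\operatorname{im}Q_r=E$ follow at once. Your route is more elementary (no mixed partials, no identification of $Q_r$ as a derivative of $H$) and has the merit of describing $E$ concretely as the span of the image of $\zc_r$; the paper's route has the advantage of establishing the auxiliary relations $Q_r=\kdiff{r}H(t)$ and $s^rQ_r=H(s)\circ Q_r$, which are reused later (the subspaces $E_r$ are recognized as eigenspaces of $H(t)$ in the discussion following Theorem 4.2, and the formula for $Q^i_r$ in terms of $H^i_t$ drives the proof of local triviality of double homogeneity structures in Section 5). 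Both arguments are complete; only the bookkeeping slip $\zc_r^{-1}(\zvy)$ for $\zc_r^{-1}(0)$ should be corrected.
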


\bepf Let  $\zvf_r: M \to \jetb{r}{M}$ be defined as in
(\ref{e:defzvf}). From (\ref{e:ziTM}) we derive
\be{e:zizc}
\zi_r\circ \zc_r  =\zvf_r.
\ee
Hence,  $\zi_r(\zc_r(h_t(p))) = t.\zi_r(\zc_r(p)) = \zi_r(t^r\cdot \zc_r(p))$, by  (\ref{e:TMjet}) and
(\ref{e:l1}), that justifies (\ref{e:psi1}). Let us consider the maps $H(t), Q: T_\zvy M \to T_\zvy M$ defined
by
\be{e:Htt}
H(t)= H_t := \differ_\zvy h_t,
\ee
\be{e:Qr}
Q_r :=\differ_\zvy\zc_r.
\ee
Like in the proof of \cite[Theorem~2.1]{GR} we find that
\be{e:l2}
H(ts) = H(t)\circ H(s) = H(s)\circ H(t)
\ee
by differentiating $h_{ts} =h_t \circ h_s$ at $\zvy\in M$. In view of the Schwarz's theorem on mixed partial
derivatives,
\be{e:l3}
Q_r = \kdiff{r} H(t).
\ee
By a simple calculation we find that $ \kdiff{r}$ applied to (\ref{e:psi1}) gives (\ref{e:psi2}) (as $\left.
\frac{d^j}{dt^j}\right|_{t=0} h(t, p)= 0$, for $1\leq j\leq r-1$). Applying $\kdiff{r}$ to (\ref{e:l2}), we
get in turn
$$
s^r\cdot Q_r = Q_r\circ H(s)  = H(s)\circ Q_r.
$$
Eventually, by applying $\frac{1}{r!} \left. \frac{d^r}{ds^r}\right|_{s=0}$ to the last expression, we end up
with
$$
Q_r\circ Q_r =  Q_r,
$$
hence $Q_r$ is a projection onto a subspace, say $E$, of $T_\zvy M$. We are left with proving the last
statement of the lemma. By (\ref{e:psi2}) we get $\zc_r(p)\in E$, hence $\zc_r$ is a map from $M$ to $E$ of
maximal rank at $\zvy$. Applying now the Implicit Function Theorem we find that $\zc_r^{-1}(0)$ is a smooth
submanifold around $\zvy$ of dimension $\text{dim }\,T_\zvy M - \text{dim}\,E$. But, according to the
hypothesis, $\zc_r(p) = 0 $ if and only if $p = \zvy$.  Hence, $E = T_\zvy M$ and $Q_r = \differ_\zvy\zc_r$ is
the identity, that completes our proof.

\epf

\begin{theo}\label{thm:main} Any homogeneity structure $h$ of degree $n$ on a manifold $M$
determines a unique structure of a  graded bundle $\zp: M\to M_0$ of degree $n$ for which $h$ coincides
with the canonical action %(\ref{homact})
of the semigroup $(\Real_{\geq0}, \cdot)$ by homotheties on this bundle. In other words, for a homogeneity
structure we can always find an atlas with homogeneous coordinates. The map (\ref{e:defzvf})
establishes an isomorphism of this graded bundle with a graded subbundle of $T^n M$.
\end{theo}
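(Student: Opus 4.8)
\medskip\noindent\emph{Proof strategy.} The plan is to realise $M$ as a graded subbundle of $T^nM$ via the map $\zvf_n$, building a homogeneous atlas fiberwise out of Lemma~\ref{L} and propagating everything from the fixed point of $h$ by equivariance. First I would dispose of the base: since $h_0\circ h_0=h_{0\cdot 0}=h_0$, the map $h_0$ is a smooth retraction of $M$ onto $M_0=h_0(M)$, so, exactly as in the proposition preceding Lemma~\ref{L} (via \cite[Theorem~1.13]{KMS}), $M_0$ is an embedded submanifold and $\zp:=h_0:M\to M_0$ is a surjective submersion. From $h_0\circ h_t=h_0$ every $h_t$ preserves each fiber $F_\zvy:=h_0^{-1}(\zvy)$, and $h$ restricts there to a smooth action of $(\Real_{\ge0},\cdot)$ with $h_0\equiv\zvy$ for which, by~(\ref{cond}), $\zvf_n|_{F_\zvy}$ vanishes only at $\zvy$. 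As a graded-bundle structure is a local datum over $M_0$, it then suffices to build, over a coordinate domain $U_0\subset M_0$ with coordinates $(x^a)$, a homogeneous coordinate system on $\zp^{-1}(U_0)$ whose degree-$0$ part is $x^a\circ h_0$ (these are $h$-invariant, since $x^a\circ h_0\circ h_t=x^a\circ h_0$).

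The positive-degree coordinates I would produce fiberwise, by iterating Lemma~\ref{L}. On $F_\zvy$ one has $\jet{\za_p}_0\sim 0$ automatically, so $\zc_1(p)=\left.\frac{d}{dt}\right|_{t=0}h_t(p)\in T_\zvy F_\zvy$ is defined, and Lemma~\ref{L} gives the homogeneity $\zc_1(h_tp)=t\,\zc_1(p)$, the fact that $\differ_\zvy(\zc_1|_{F_\zvy})$ is a projection onto a subspace $E^1_\zvy$ with $\zc_1(F_\zvy)\subset E^1_\zvy$, and — as in its proof, since $\zc_1|_{F_\zvy}$ then has surjective differential onto $E^1_\zvy$ at $\zvy$ — that $\zc_1^{-1}(0)$ is, near $\zvy$, a submanifold of complementary dimension. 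On $\zc_1^{-1}(0)$ the first jets vanish, so $\zc_2$ becomes defined there and Lemma~\ref{L} applies again; iterating gives a nested chain of $h$-invariant submanifolds $F_\zvy=F^{(0)}_\zvy\supset F^{(1)}_\zvy\supset\cdots$ with $F^{(r)}_\zvy=(\zc_r|_{F^{(r-1)}_\zvy})^{-1}(0)$, which by~(\ref{cond}) terminates at $F^{(n)}_\zvy=\{\zvy\}$. At the terminal step $\zc_n$ vanishes only at $\zvy$, so the last assertion of Lemma~\ref{L} forces $\differ_\zvy(\zc_n|_{F^{(n-1)}_\zvy})$ to be the identity; combined with~(\ref{e:zizc}), a tangent vector in $\ker\differ_\zvy\zvf_n$ would then lie in $T_\zvy F^{(1)}_\zvy$, hence, being also killed by the degree-$2$ component, in $T_\zvy F^{(2)}_\zvy$, and so on down to $T_\zvy F^{(n)}_\zvy=\{0\}$, so $\zvf_n|_{F_\zvy}:F_\zvy\to T^n_\zvy M$ is an immersion at $\zvy$. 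Since $h_t$ is a diffeomorphism of $F_\zvy$ for $t>0$ (with inverse $h_{1/t}$) and $h_tp\to\zvy$ as $t\to0$, the equivariance $\zvf_n\circ h_t=t.\,\zvf_n$ of~(\ref{e:l1}) propagates the immersion property, and then injectivity, from $\zvy$ to all of $F_\zvy$. Thus $\zvf_n$ embeds $F_\zvy$ as an $h$-invariant submanifold of the graded space $T^n_\zvy M$, whence $F_\zvy$ is itself a graded space and pulling back homogeneous coordinates on $T^n_\zvy M$ yields global homogeneous coordinates on $F_\zvy$, with exactly $d_r:=\dim E^r_\zvy$ of them in each degree $r=1,\dots,n$.

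Then I would make this smooth in $\zvy$. Using $h$-equivariance and the termination $F^{(n)}=M_0$, the projections $\differ_\zvy(\zc_r|_{F^{(r-1)}_\zvy})$ organise into vector-bundle endomorphisms of successive restrictions of $T^{\mathrm{vert}}M|_{M_0}$ of locally constant rank, their images $E^r$ are vector subbundles, the zero loci $F^{(r)}$ are smooth subbundles of $\zp^{-1}(U_0)\to U_0$, and the fiberwise peeling runs in families, producing a trivialisation $\zp^{-1}(U_0)\simeq U_0\times\R^\dd$ with $y^i\circ h_t=t^{w_i}y^i$. Two such homogeneous charts are related by a fiberwise automorphism of $\R^\dd$, because the transition map intertwines $h$; hence the transition functions lie in $\GH(\R^\dd)$ and $M$ is a graded bundle of degree $n$ and rank $\dd$ whose homotheties are the given $h_t$. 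For uniqueness, any graded bundle structure on $M$ with homotheties $h_t$ has weight vector field equal to the infinitesimal generator of $t\mapsto h_{e^t}$, hence determined by $h$, and a graded bundle is determined by its weight vector field, by the equivalent characterisation of graded bundles in terms of their weight vector fields. Finally $\zvf_n:M\to T^nM$ is globally defined and, by the above together with the degree-$0$ coordinates $x^a\circ h_0$, an injective immersion of $M$ onto an $h$-invariant submanifold of $T^nM$ which in the homogeneous coordinates of $T^nM$ is locally cut out by vanishing of homogeneous coordinates, i.e.\ a graded subbundle; so $\zvf_n$ is the asserted isomorphism.

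The step I expect to be the main obstacle is this globalisation over $M_0$: proving that the fiberwise ranks $d_r$ do not jump, that the $E^r$ and the $F^{(r)}$ are genuine smooth subbundles, and that the equivariant peeling varies smoothly with the base point — and, relatedly, that the injectively-immersed $\zvf_n(M)$ is embedded and locally defined by vanishing of homogeneous coordinates. A clean way to absorb all of this would be to establish separately the general fact, a direct analogue of the description of vector subbundles as homothety-invariant submanifolds in \cite{GR}, that every embedded, homothety-invariant submanifold of a graded bundle is automatically a graded subbundle, and to apply it to $\zvf_n(M)\subset T^nM$.
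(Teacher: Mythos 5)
Your proposal follows the same route as the paper: realise $M$ inside $T^nM$ via $\zvf_n$, work fiberwise by iterating Lemma~\ref{L} to produce the nested chain $N_1\supseteq N_2\supseteq\cdots$, the splitting $T_\zvy M=E_1\oplus\cdots\oplus E_n$, and the injectivity of $\differ_\zvy\zvf_n$ along $M_0$, then propagate by the equivariance (\ref{e:l1}). Up to and including the injectivity of $\differ_\zvy\zvf_n$ your argument matches the paper's essentially step for step.

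The genuine gap is the passage from ``$\zvf_n|_{F_\zvy}$ is an injective immersion'' to ``$\zvf_n$ embeds $F_\zvy$ as a submanifold of $T^n_\zvy M$ on which restricted homogeneous coordinates give a global chart''. An injective immersion need not be an embedding and its image need not be a submanifold; equivariance alone does not supply this, and it is exactly where the homogeneous atlas has to come from. The paper closes this point by an explicit construction: choose fiber coordinates $(x_{r,a})$ with $\tanvector{x_{r,a}}(\zvy)\in E_r$; the triangular form (\ref{e:tngvctrs}) of $\differ_\zvy\zvf$ shows that the adapted coordinates $x_{r,a}^{(r)}$ restrict to a chart identifying $\widetilde U=\zvf(U)$ with an open neighbourhood $I$ of $0$ in $\R^N$; the $r$-homogeneity of $x_{r,a}^{(r)}$ then makes each $t.\widetilde U$, $t\ge 1$, a chart onto $t.I$, and since $\bigcup_{t\ge1}t.I=\R^N$ the saturation $\widetilde M=\bigcup_{t\ge1}t.\widetilde U$ is an embedded copy of $\R^N$ onto which $\zvf$ is a diffeomorphism (via $p\mapsto t^{-1}.\zvf(h_t(p))$ and (\ref{e:l1})). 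Without some version of this saturation argument the proof is incomplete. Moreover, the ``clean way to absorb'' you suggest --- that every embedded homothety-invariant submanifold of a graded bundle is a graded subbundle --- is in the paper a Corollary deduced \emph{from} Theorem~\ref{thm:main}, so invoking it here is circular; and in any case it presupposes the embeddedness of $\zvf_n(M)$, which is precisely the point at issue. By contrast, the globalisation over $M_0$ that you single out as the main obstacle is comparatively benign: the operators $Q_r$ of (\ref{e:Qrr}) form a smooth family of projections of $(TM)_{|M_0}$, so their rank equals their trace and is locally constant, and the fiberwise construction runs uniformly over a coordinate neighbourhood in $M_0$, as the paper indicates.
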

\bepf
Let $\zvf = \zvf_n: M\to \jetbndln$ be given as in (\ref{e:defzvf}). We shall prove that $\zvf$ is a
diffeomorphism onto its image $\zvf(M)$ which is equipped with a homogeneity structure of degree $n$ inherited
from $\jetbndln$. Moreover, $\zvf$ intertwines the homogeneity structure $h$ with that on $\jetbndln$, as
we have seen in (\ref{e:l1}). Because $h_0: M\to M$ is a smooth projection onto $M_0 := h_0(M)$ (as $h_0\circ
h_0 = h_0$), $M_0$ is a submanifold of $M$ (\cite{KMS}, Theorem~1.13). Moreover, for $\zvy \in M_0$ there is a
neighbourhood $U\subset M$ of $\zvy$ and and there are local coordinates $(q_a, x_i)$ on $U$ in which $h_0$ has the form
\be{e:locsubm}
h_0(q_a, x_i) = (q_a, 0)\,,
\ee
i.e., $h_0$ is a local submersion along $M_0\subset M$.

The strategy depends on proving that there is a graded subbundle $\widetilde{M}$ of $\jetbndln$ such
that, for each $\zvy\in M_0$, the map $\zvf$ is a diffeomorphism from a neighbourhood $U$ of $\zvy$ onto an
open subset of $\widetilde{M}$. Then, using the property (\ref{e:l1}), we shall show that $\zvf$ is a
diffeomorphism onto the whole $\widetilde{M}$.

We want to show first that $\differ_\zvy\zvf$ is one-to-one for $\zvy\in M_0$. Because $\zvf$ fixes points of
$M_0$ (we shall consider $M$ as a submanifold of $\jetbndln$) and the fibers $M_\zvy := h_0^{-1}(\zvy)$ are
transversal to $M_0$ at $\zvy$ (by (\ref{e:locsubm})), it is enough to prove that
$\differ_\zvy\widetilde{\zvf}$ is injective, where $\widetilde{\zvf} = \zvf_{|M_\zvy}$. In other words, it is
enough to consider the case where $M_0 = \{\zvy\}$ is a single point. From now on, we assume that $M=M_\zvy$ and
$\widetilde{\zvf} = \zvf$.

We are going to define inductively a series of maps  $\zc_r: N_r\to V_r:=T_\zvy N_r$, $r=1, 2, \ldots, n$, for
some decreasing sequence of submanifolds $N_1 := M \supseteq N_2 \supseteq \ldots \supseteq N_n$. The formula for $\zc_r$ is the same as in (\ref{e:defzc}), but in order to have it correctly defined, the submanifold $N_r$ will be chosen in such
a way that $\zc_{r-1}(p) = 0 $ for $p\in N_r$. We are going to prove, inductively with respect to $r$, that
$\zc_r^{-1}(0)$ is a submanifold  around $\zvy$. It will serve as the next submanifold $N_{r+1}$.

Let $r=1$, $V_1 := T_\zvy M$. In view of  Lemma~\ref{L} we find that $Q_1\circ Q_1 = Q_1$ and $Q_1(\zc_1(p))=
\zc_1(p)$, where $Q_1 = \differ_\zvy\zc_1$. Hence, $Q_1$ is a projection on a subspace $E_1\subseteq V_1$ and
$V_1 = E_1 \oplus K_1$, where $K_1 = \text{ker}\,Q_1$. Moreover, $\zc_1(N_1)\subseteq E_1$, hence $\zc_1$ is a
map of maximal rank at $\zvy$. It follows from the Implicit Function Theorem that $N_2:= \zc_1^{-1}(0)\cap
U_1$ is a smooth submanifold of $M$ for some open neighbourhood $U_1\subset M$ of $\zvy$ and, moreover,
$T_\zvy N_2 = K_1$.

Let us assume that we have already defined submanifolds $N_1, \ldots, N_r$, and $N_r$ is of the form
$\zc_{r-1}^{-1}(0)\cap U_{r-1}$ for an open neighbourhood $U_{r-1}\subset N_{r-1}$ of $\zvy$. It follows that
$\zc_r: N_r\to T_\zvy N_r =:V_r$ is well defined. From $\zc_{r-1}(h_t(p)) = t^{r-1}\cdot \zc_{r-1}(p)$ (see
(\ref{e:psi1})), we find that $h(t, p) \in \zc_{r-1}^{-1}(0)$ for $p\in N_r$ and  any $t\geq 0$. Hence, $h$
gives rise to a local action on $N_r$. According to Lemma \ref{L}, $\zc_r$ is a map from $N_r$ to a subspace
$E_r\subseteq V_r$ of the maximal rank at $\zvy$, and $Q_r = \differ_\zvy \zc_r \in \text{End}(V_r)$ is the
projection on $E_r$. Hence, $\zc_r^{-1}(0)$ is a submanifold around $\zvy$, what enables us to define a
submanifold $N_{r+1}$, such that $V_{r+1} = T_\zvy N_{r+1}$ is the kernel $K_r$ of $Q_r$. We have $V_r  = E_r
\oplus K_r$ and $V_{r+1} = K_r$. The inductive construction of $\zc_1, \ldots, \zc_n$ is completed.

By the hypothesis of our theorem, $\zc_n^{-1}(0) = \{\zvy\}$, hence $V_n =E_n$ and $K_n = \{0\}$. In view of
Lemma~\ref{L}, $\differ_\zvy \zc_n$ is the identity on $E_n$. We have proved that  the tangent space to $M$ at
$\zvy$ has a canonical decomposition
\be{e:TMdec}
T_\zvy M  = E_1\oplus \ldots \oplus E_n
\ee
where $E_r$ is the image of the projection $Q_r\in\text{End}(V_r)$ and $V_r = E_r\oplus E_{r+1}\oplus
\ldots\oplus E_n$ for $r=1,2 , \ldots, n$.  Let $q_r^n: \jetbndl{n}\to \jetbndl{r}_0$ be the canonical
projection. We have $q_r^n\circ \zvf_n = \zvf_r  =\zi_r\circ \zc_r$ on $N_r$ (by (\ref{e:zizc})), so
\be{e:zizvf}
\differ_\zvy q_r^n\circ \differ_\zvy\zvf_n = \differ_\zvy \zvf_r  =\differ_\zvy\zi_r\circ \differ_\zvy\zc_r
=\differ_\zvy\zi_r\circ Q_r
\ee
on $V_r$. Let us assume for a moment that $e=(e_1, \ldots, e_n)$, $e_i\in E_i$, is in the kernel of
$\differ_\zvy\zvf_n$. Applying (\ref{e:zizvf}) to $e$, with $r=1$, we find that $0 = Q_1(e) = e_1$, so $e \in
V_2$ and we may apply again (\ref{e:zizvf}) to $e$, but now with $r=2$. Since $\differ_\zvy\zi_r$ is
injective, $0 = Q_2(e) = e_2$. Continuing in the same manner with $r= 3, \ldots$, we get $e_r=0$ for any $r$.
Hence $\differ_\zvy\zvf_n$ is injective, as we have claimed.

It follows that there exists a neighbourhood $U\subset M$ of $\zvy$ such that $\zvf_{|U}$ is a diffeomorphism
onto its image $\widetilde{U}$, $\widetilde{U}\subset \jetspacen{n}{\zvy}{M}$. It is possible to find local
coordinates $(x_{r, a})$, $1\leq r\leq n$, $1\leq a\leq \text{dim}\,E_r$, on $U$ (if necessary, we replace $U$
with a smaller neighbourhood of $\zvy$) such that $\frac{\partial}{\partial x_{r, a}}(\zvy)\in E_r$ for any
$1\leq r\leq n$ and $1\leq a\leq\text{dim}\,E_r$. Let $(x_{r, a}^{(s)})$, $r,s\geq1$,  be the adapted
coordinate system on $\jetspacen{n}{\zvy}{M}$. Let us see that the restrictions of $x_{r, a}^{(r)}$ ($r, a$ as
above) to some neighbourhood of $\zvy$ in $\widetilde{U}$ form a local coordinate system. Obviously, the
vectors $X_{r, a} := (\differ_\zvy\zvf)(\frac{\partial}{\partial x_{r,a}}(\zvy))$ span the tangent space to
$\widetilde{U}$ at $\zvy$. Moreover, $\differ_\zvy\zc_r$ fixes $\frac{\partial}{\partial x_{r, a}}(\zvy)$ and,
from the coordinate description of $\zi_r$ given below (\ref{e:TMjet}),
$(\differ_\zvy\zi_r)(\frac{\partial}{\partial x_{r, a}}(\zvy)) = \frac{\partial}{\partial x_{r,
a}^{(r)}}(\zvy)$. Hence, using (\ref{e:zizvf}), we find that $\differ_\zvy q^n_r$ applied to $X_{r,a}$ gives
$\frac{\partial}{\partial x_{r, a}^{(r)}}(\zvy) \in \jetspacen{r}{\zvy}{M}$. Therefore
\be{e:tngvctrs}
X_{r,a} = \frac{\partial}{\partial x_{r, a}^{(r)}}(\zvy) + \text{higher order terms},
\ee
where the `higher order terms' means a linear combination of vectors $\frac{\partial}{\partial x_{u,
b}^{(s)}}(\zvy)$ with $s>r$, so the differentials $\xd x_{r, a}^{(r)}$ are linearly independent on
$T_\zvy\widetilde{U}$.
Without loss of generality we may assume that $\widetilde{U} = \zvf(U)$.
Hence, the chart
$(\widetilde{U}, (x_{r, a}^{(r)}))$ provides an identification of $\widetilde{U}\subset
\jetspacen{n}{\zvy}{M}$ with an open neighbourhood, say  $I$, of $0$ in $\Real^N$. Let us define
\be{e:extn}
\widetilde{M} = \bigcup_{t\geq 1} t.\widetilde{U}.
\ee
Thanks to the $r$-homogeneity of $x_{r, a}^{(r)}$, the chart $(t.\widetilde{U}, (x_{r, a}^{(r)}))$ gives a
diffeomorphism with another open neighbourhood, say $t.I$,  of $0$ in $\Real^N$. This notation is consistent
with the action of the semigroup $(\Real_{\ge 0}, \cdot)$ on $\Real^N$ described in Lemma \ref{L:e}. As
$\bigcup_{t\geq 1} t.I = \Real^N$, we get a diffeomorphism of $\widetilde{M}$ with $\Real^N$. Hence
$\widetilde{M}$ is a submanifold of $\jetspacen{n}{\zvy}{M}$, invariant under the canonical action of the
semigroup $(\Real_{\ge 0}, \cdot)$. Moreover, for $t>0$, the map $p\mapsto t^{-1}.\zvf(h_t(p))$ is a
diffeomorphism of $h_t^{-1}(U)$ onto $t^{-1}.\widetilde{U}$. But, according to (\ref{e:l1}),
$t^{-1}.\zvf(h_t(p)) = \zvf(p)$. Hence, $\zvf$ is a diffeomorphism of $M$ onto $\widetilde{M}$ intertwining
the homogeneity structures on $M$ and $\jetspacen{n}{\zvy}{M}$.

In the general case, when $M_0$ is not necessarily a single point, the arguments for the last sentence are very
similar. The map $\zvf$ is a local diffeomorphism along $M_0$, hence a diffeomorphism from an open
neighbourhood $U\subset M$ of $M_0$ onto a submanifold $\widetilde{U}\subset\jetbndln$ and $\widetilde{U}$
contains the image of the zero section of $\jetbndln$. Let $\widetilde{M}$ be defined  as in (\ref {e:extn}).
Since $t^{-1}.\zvf(h_t(p)) = \zvf(p)$, working separately in each fiber of $M\to M_0$ as above, we find that
actually $\zvf$ identifies $M$ with $\widetilde{M}$.  Let us pull back the structure on $\widetilde{M}$ to $M$
by means of $\zvf$. In this way we equipped $M$ with a structure of a graded bundle of degree $n$
compatible with the given homogeneity structure. This completes the proof of the existence part of the
theorem.

According to our definition of a homogeneity  bundle, the uniqueness part of the theorem is trivial. \epf

%KOREKTA
\begin{remark} The reason why we have considered the action $h$ of non-negative reals and not just all reals is that this version of Theorem~\ref{thm:main} is more general, as a natural extension to an action of $(\R,\cdot)$ follows for free.
Indeed, let us notice that since we can choose coordinates homogeneous,
$$
x_{r,a}(h_t(x)) = t^r \, x_{r, a}, \,\,t\geq 0,
$$
we can canonically extend the $\Real_{\geq0}$-action to $(\Real, \cdot)$ action by simply allowing $t$
on the r.h.s. to take negative values. Moreover, this extension is unique.
It is enough to show that the action by $-1$, which we denote by $f: M\to M$, is uniquely determined by
the homotheties $h_t$, $t\geq 0$. As $f(h_t(p))= h_t(f(p)) = h_{-t}(p)$ for $t\geq 0$, $f$ is
a homogeneous bundle morphism. Hence, in some homogeneous coordinates $(x_{r, a})$ on $M$,
we have $h_t^*(x_{r,a}) = (-t)^r f^*(x_{r, a})$ for $t<0$. It follows that,
$$
\frac{1}{r!}\left. \frac{d^{r}}{dt^{r}}\right| _{t=0^+} h_t^*(x_{r,a})  = x_{r,a}, \quad
\frac{1}{r!}\left. \frac{d^{r}}{dt^{r}}\right| _{t=0^-} h_t^*(x_{r,a})  = (-1)^r\cdot f^*(x_{r,a}).
$$
Therefore, by the smoothness of $h$ at $t=0$, $f^*(x_{r, a}) = (-1)^r x_{r, a}$ and $h_t^*(x_{r,a}) = t^r \cdot x_{r,a}$
for any $t\in \Real$, that finishes our proof.
\end{remark}

Actually, the assumptions of the above theorem above can be reformulated: we can replace the nondegeneracy condition with the condition that we deal with a monoid action, i.e. $h_1=id_M$. For the semigroup action the nondegeneracy is not automatic even when some jets are nontrivial, as the example $h:[0,\infty)\ti\R^2\ra\R^2$, $h_t(x,y)=(tx,0)$ shows.

Let $h:[0, \infty) \times M \to M$ be an arbitrary smooth action of the monoid $(\Real_{\geq 0}, \cdot)$ and assume that there exist a point $p\in M$ and $t>0$ such that $h_t(p)\neq h_0(p)$. According to Proposition~\ref{p:flat}, there exists an integer $n$ such that the $n$-th jet of $h_t(p)$ does not vanish at $t=0$. We claim that there always exists a common integer $n$ such that the $n$-th jet of $t\mapsto h_t(p)$ at $t=0$ does not vanish for any $p\in M\setminus M_0$. Taking the lowest such $n$, we obtain  a homogeneity structure of degree $n$ on $M$ defined by $h$.

To prove this fact it is enough to modify slightly the proof of Theorem~\ref{thm:main}. First, by working in the fibers of the projection $h_0$,
the inductive procedure of Theorem~\ref{thm:main}  gives a sequence of submanifolds
$M_\zvy= N_1\supseteq N_2 \supseteq \ldots$ such that $N_{r+1} = \zc_r^{-1}(0)$ near $\zvy$, where $\zc_r: N_r\to T_\zvy N_r$
as in (\ref{e:defzc}) and $M_\zvy = h_0^{-1}(\zvy)$ near $\zvy$. Thanks to Proposition~\ref{p:flat},
it is impossible that $\zc_r, \zc_{r+1}, \zc_{r+2}, \ldots $
is a sequence of the zero maps. Hence, the dimension argument shows that actually, for some $n$, we have  $N_{n+1} = \{\zvy\}  =\zc_n^{-1}(0)$ and we obtain  a finite decomposition,
$$
T_\zvy M  = E_1^\zvy\oplus \ldots \oplus E_n^\zvy,
$$
as in Theorem~\ref{thm:main}. Let us define
$$
H_t^\zvy: T_\zvy M \to T_\zvy M, \quad H_t^\zvy :=  \differ_\zvy h_t,
$$
as in~(\ref{e:Htt}). According to the proof of Theorem~\ref{thm:main}, there exist homogeneous coordinates $(x_{r,a}')$ on $M$,
$1\leq r\leq n$, $1\leq a\leq\text{dim}\, E_r^\zvy$,
in which the action $h$ reads as $h_t^*(x_{r,a}')  = t^r\cdot x_{r,a}'$, namely, $x_{r, a}' = x_{r, a}^{(r)}\circ \zvf$.
Therefore $E_j^\zvy$, $1\leq j\leq n(\zvy)$, is
an eigenspace of the linear map $H_t^\zvy$ with the eigenvalue $t^j$. In the general case, when $M_0$ is connected but is not necessarily a single
point, thanks to the continuity of the map $t\mapsto H_t^\zvy$ the dimension of $E_j^\zvy$ cannot jump, hence $n=n(\zvy)$
is constant. By the restriction of some homogeneous coordinates on $T^n M$ we obtain homogeneous coordinates for $M$.

Summarizing, we can write the following.
\begin{theo} A smooth action $h:[0,\infty)\ti M\ra M$ of the multiplicative semigroup $(\R_{\ge 0},\cdot)$ defines  on a connected manifold $M$ a homogeneity structure of some degree $n$ (and thus a graded bundle structure) if and only if it is a monoid action, $h_1=id_M$.
In this case the action can be uniquely extended to an action of the monoid $(\R,\cdot)$ of multiplicative reals. In particular, the category of graded bundles with their morphisms is equivalent to the category of $(\R,\cdot)$-manifolds and equivariant maps.
\end{theo}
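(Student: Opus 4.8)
The plan is to separate the biconditional from the two further assertions and settle each in turn, leaning heavily on the results already assembled in this section.

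\emph{From homogeneity structure to monoid action.} This direction is immediate from the first Proposition of this section: a homogeneity structure of degree $n$ satisfies $h_1=id_M$ because the nondegeneracy condition (\ref{cond}) forces $M_1=h_1(M)=M$, while $h_1$ is always the identity on $h_1(M)$. \emph{From monoid action to homogeneity structure.} Assume $h_1=id_M$. If every orbit curve $\za_p(t)=h_t(p)$ is constant then $M=M_0$ and $h$ is trivially a homogeneity structure of degree $1$, so suppose some orbit curve is non-constant. The strategy is to rerun, fibrewise over $M_0=h_0(M)$, the inductive construction from the proof of Theorem~\ref{thm:main}: over a point $\zvy\in M_0$ one builds a descending chain of submanifolds $M_\zvy=N_1\supseteq N_2\supseteq\cdots$ with $N_{r+1}=\zc_r^{-1}(0)$ near $\zvy$, where $\zc_r\colon N_r\to T_\zvy N_r$ is given by (\ref{e:defzc}). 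The crucial extra input is Proposition~\ref{p:flat}: it forbids $\zc_r,\zc_{r+1},\dots$ from being all zero maps, and since $\dim N_{r+1}<\dim N_r$ whenever $\zc_r\neq 0$, the chain stabilises, $N_{n+1}=\{\zvy\}=\zc_n^{-1}(0)$ for some $n=n(\zvy)$. Exactly as in Theorem~\ref{thm:main} this yields a decomposition $T_\zvy M=E_1^\zvy\oplus\cdots\oplus E_{n(\zvy)}^\zvy$, with $E_j^\zvy$ the $t^j$-eigenspace of $\differ_\zvy h_t$, together with fibre coordinates satisfying $h_t^*(x_{r,a})=t^r x_{r,a}$.

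Connectedness of $M$ enters only to make $n(\zvy)$ independent of $\zvy$: $M_0$ is connected as a continuous image of $M$, the map $\zvy\mapsto\differ_\zvy h_t$ is continuous, so the eigenspace dimensions cannot jump and $n(\zvy)\equiv n$ is constant. Restricting homogeneous coordinates of $T^nM$ along $\zvf_n$ as in Theorem~\ref{thm:main} then produces a global homogeneous atlas, i.e.\ a homogeneity structure of degree $n$, and the graded bundle structure follows from Theorem~\ref{thm:main} itself.

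\emph{Extension to $(\R,\cdot)$ and the category equivalence.} The extension is the content of the Remark following Theorem~\ref{thm:main}: in homogeneous coordinates with $h_t^*(x_{r,a})=t^r x_{r,a}$ for $t\ge 0$ one extends by the same formula to $t<0$, and this is the unique smooth extension, since the $(-1)$-action is a graded bundle morphism, which together with smoothness of $h$ at $0$ forces $f^*(x_{r,a})=(-1)^r x_{r,a}$. For the categorical statement I would exhibit the two obvious functors: a graded bundle goes to its underlying manifold equipped with the unique $(\R,\cdot)$-extension of the homothety action; an $(\R,\cdot)$-manifold goes to its underlying manifold with the restricted $(\R_{\ge 0},\cdot)$-action, which is a monoid action and hence, by the previous step, a graded bundle. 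These are mutually inverse by uniqueness of the extension. On arrows one uses that graded bundle morphisms are precisely the $(\R_{\ge 0},\cdot)$-equivariant maps (by definition), and that any such map is automatically $(\R,\cdot)$-equivariant because both extended actions are pinned down by $x_{r,a}\circ h_t=t^r x_{r,a}$ for all $t\in\R$; hence the functors are full, faithful and essentially surjective.

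\emph{Main obstacle.} The only substantive step is the converse above, and within it the two delicate points are that the descending chain of the $N_r$ terminates after finitely many steps --- this is where Proposition~\ref{p:flat} is indispensable, since otherwise one only knows each non-constant orbit has \emph{some} finite nonvanishing jet order and not a uniform one --- and that the resulting degree does not vary from fibre to fibre, for which the connectedness hypothesis and the eigenvalue argument are exactly what is needed. Everything else is bookkeeping on top of Theorem~\ref{thm:main} and its Remark.
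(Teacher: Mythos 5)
Your proposal is correct and follows essentially the same route as the paper: the forward direction via the Proposition on monoid actions, the converse by rerunning the fibrewise inductive construction of Theorem~\ref{thm:main} with Proposition~\ref{p:flat} guaranteeing termination of the chain $N_1\supseteq N_2\supseteq\cdots$, the eigenvalue/continuity argument for constancy of the degree over a connected base, and the Remark after Theorem~\ref{thm:main} for the unique extension to $(\R,\cdot)$. The only difference is that you spell out the category equivalence explicitly, which the paper leaves as an immediate consequence.
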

\begin{cor} A submanifold $X$ of a graded bundle $M$ is a graded subbundle of $M$ if and only if it is invariant with respect to the homotheties associated with the graded bundle structure on $M$.
\end{cor}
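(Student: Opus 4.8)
\bepf
The "if" direction is the easy one: in a system of homogeneous coordinates $(y^j)$ near $p\in X$ in which $X$ reads $y^{i_1}=\dots=y^{i_k}=0$, the homothety $h_t$ acts by $y^j\mapsto t^{w_j}y^j$ and so visibly preserves $X$. For the converse assume $X\subseteq M$ is a submanifold with $h_t(X)\subseteq X$ for all $t\ge 0$, put $X_0=h_0(X)=X\cap M_0$, and let us aim at producing, around every point of $X$, a homogeneous chart of $M$ in which $X$ is a coordinate subspace. I would proceed in three steps: (1) near points of $X_0$, write $X$ as the common zero set of finitely many \emph{homogeneous} functions on $M$ with independent differentials; (2) replace these homogeneous functions by homogeneous \emph{coordinates}; (3) spread the adapted charts obtained along $X_0$ to all of $X$ by means of the homotheties.

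\emph{Step 1.} Fix $\zvy\in X_0$ and work in homogeneous coordinates $(x^a,y^i)$ around $\zvy$, so $\zvy\in M_0$, $y^i(\zvy)=0$ and $\D_\zvy h_t$ is the diagonal map with eigenvalue $t^{w_i}$ on $\partial_{y^i}$ and $1$ on $\partial_{x^a}$; after straightening $X_0$ inside $M_0$ by an allowed change of the degree-$0$ coordinates (which turns the degree-$0$ part of the final chart into coordinate functions vanishing on $X$) and restricting to the corresponding coordinate subbundle, we may further assume $X_0=M_0$ locally, so only positive weights occur. Since $h_t$ fixes $\zvy$ and preserves $X$, the subspace $T_\zvy X\subseteq T_\zvy M$ is invariant under all $\D_\zvy h_t$, hence is the direct sum of its intersections with the weight-eigenspaces; consequently $(T_\zvy X)^\perp$ is spanned by covectors $\D\ell|_\zvy$ with $\ell$ a homogeneous \emph{linear} function of some degree $r=w_i$ annihilating $T_\zvy X$. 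For such an $\ell$ the restriction $\ell|_X$ is homogeneous on $X$ and vanishes to second order at $\zvy$; extending it to a function on $M$ that also vanishes to second order at $\zvy$ and passing to its degree-$r$ homogeneous component — which, by Lemma~\ref{L:e}, again vanishes to second order at the base point $\zvy$ — yields a homogeneous $g$ on $M$ with $g|_X=\ell|_X$ and $\D g|_\zvy=0$. Then $f:=\ell-g$ is homogeneous, vanishes on $X$, and $\D f|_\zvy=\D\ell|_\zvy$; letting $\ell$ run over a basis of the relevant functionals produces homogeneous $f_1,\dots,f_c$ (with $c=\mathrm{codim}\,X$) vanishing on $X$ and with $\D f_1|_\zvy,\dots,\D f_c|_\zvy$ independent, so $\{f_1=\dots=f_c=0\}$ equals $X$ near $\zvy$.

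\emph{Step 2.} Here I would induct on $c$ (the case $c=0$ being trivial). Since $\zvy$ is a base point and $r=\deg f_1\ge 1$, the covector $\D f_1|_\zvy$ is a nonzero combination of the $\D y^i|_\zvy$ with $w_i=r$, while $\partial f_1/\partial x^a(\zvy)=0$; hence $\partial f_1/\partial y^{i_1}(\zvy)\neq 0$ for some $i_1$ with $w_{i_1}=r$, and an elementary triangular-determinant computation shows that replacing the coordinate $y^{i_1}$ by $f_1$ gives again homogeneous coordinates of $M$ near $\zvy$ (with the same weights). Then $P':=\{f_1=0\}$ is a coordinate graded subbundle, $X\subseteq P'$, the functions $f_2,\dots,f_c$ restrict to homogeneous functions on $P'$ with independent differentials at $\zvy$ cutting out $X$, and homogeneous coordinates of $P'$ together with $f_1$ are homogeneous coordinates of $M$; the inductive hypothesis applied to $X\subseteq P'$ finishes the step. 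For Step 3, let $p\in X$ be arbitrary and $\zvy:=h_0(p)\in X_0$; as $h_t(p)\to\zvy$ when $t\to 0^+$, we have $h_{t_0}(p)\in U$ for some small $t_0>0$, where $U$ is an adapted homogeneous chart about $\zvy$ from Steps 1--2, and since $h_{t_0}$ is a diffeomorphism, pulling $U$ back along $h_{t_0}$ and rescaling the $j$-th coordinate by $t_0^{w_j}$ produces a homogeneous chart around $p$ in which $X$ is still the same coordinate subspace. This gives the required local description everywhere, so $X$ is a graded subbundle of $M$.

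The step I expect to be the real obstacle is Step 1: producing homogeneous functions with a prescribed differential at $\zvy$ is easy, but forcing them to \emph{vanish identically on $X$} without disturbing that differential is exactly the delicate point, resolved by the extend-then-project construction above; and that \emph{finitely many} homogeneous functions already cut out $X$ rests on $T_\zvy X$ being compatible with the weight decomposition, i.e. on the invariance of $X$. Steps 2 and 3 are then routine linear algebra and bookkeeping.
\epf
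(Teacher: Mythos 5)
Your argument is correct, but it takes a genuinely different route from the one in the paper. The paper disposes of the corollary in a few lines by feeding the restricted action back into the main machinery: invariance of $X$ makes $h|_X$ a homogeneity structure on $X$ (the nondegeneracy condition is inherited because $X_0=X\cap M_0$), so Theorem~\ref{thm:main} endows $X$ with a graded bundle structure, and the naturality of the embedding $\zvf:M\to T^nM$ (namely $\zvf_X=\zvf_M|_X$) identifies $X$ with the graded subbundle $\zvf(X)\subset\zvf(M)$. You instead verify the definition of a graded subbundle directly, by manufacturing adapted homogeneous charts: the eigenspace decomposition of $T_\zvy X$ under $\differ_\zvy h_t$, the extend-then-average construction of homogeneous equations for $X$ with prescribed differentials, the triangular coordinate replacement, and propagation along the fibers by $h_{t_0}$. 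Your Step 1 is sound, but the one point you should make explicit is the averaging operator itself: take $g=\kdiff{r}\left(F\circ h_t\right)$, so that smoothness and $r$-homogeneity of $g$ are immediate, $\differ_\zvy g=\kdiff{r}\left(\differ_\zvy F\circ \differ_\zvy h_t\right)=0$, and for $p\in X$ near $\zvy$ the invariance $h_t(X)\subseteq X$ gives $F(h_t(p))=t^r\ell(p)$, hence $g|_X=\ell|_X$ --- this is exactly where the hypothesis enters a second time, alongside the eigenspace splitting of $T_\zvy X$. In Step 2 you should also note that invertibility of the degree-$r$ linear block at $\zvy$ upgrades, by the triangular structure of $\GH(\R^\dd)$ (Corollary~\ref{cor:auto}), to a genuine fiberwise automorphism over a whole neighbourhood $\pi^{-1}(U')$, not just a local diffeomorphism near $\zvy$. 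What the paper's route buys is brevity and the extra conclusion $X\simeq\zvf(X)\subset T^nM$; what yours buys is an explicit local normal form that is independent of the embedding into $T^nM$ --- indeed your Steps 1--2 are essentially the argument one would need to fully substantiate the paper's terse assertion that $\zvf(X)$ is locally cut out by homogeneous coordinates of $\zvf(M)$.
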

\bepf Let $X$ be a submanifold of a graded bundle $M$ over $M_0$ of degree $n$ and assume that $X$ is
invariant with respect to homotheties $h_t$ of $M$. It is easy to see that the $(\Real_{\geq0}, \cdot)$-action
$h$ by homotheties, reduced to $X$, is a homogeneity structure on $X$. This implies that there is a unique
graded bundle structure on $X$ over the submanifold $X_0=h_0(X)\subset M_0$ for which $(h_t)_{\mid X}$
are homotheties. This graded bundle structure is isomorphic to the graded subbundle
$\phi(X)\subset\phi(M)\subset T^nM$, thus canonically a subbundle of $M\simeq\phi(M)$.

 \epf

\section{Double homogeneity structures}
By analogy with our definition of a double vector bundle \cite{GR} we propose the following.
\begin{definition}  A {\it double homogeneity structure} or {\it double graded bundle} (DHS, for short) of degree $(m, n)$ on a manifold $F$ consists of two homogeneity structures $h^1$, $h^2$ of degrees $m$ and $n$, respectively, such that $h^1_t\circ
h^2_u = h^2_u\circ h^1_t$ for $u,t\in\Real_{\ge 0}$. More generally, an {\it $r$-tuple homogeneity structure} or {\it $r$-tuple graded bundle} of
degree $(m_1,\dots,m_r)$ on $F$ consists of $r$ pairwise commuting--in the above sense--homogeneity structures
on $F$. In particular, a {\it double graded bundle} is a manifold equipped with two graded bundle structures which are {\it compatible} in the sense that the corresponding homotheties commute, i.e., the corresponding
homogeneity structures form a double homogeneity structure.
\end{definition}

\noindent A DHS on $F$ gives rise to the following diagram of four graded bundles
\[
\xymatrix{
F \ar[r]^{{h}^2_0} \ar[d]_{h^1_0} & F_2 \ar[d]^{\underline{h}^2_0}\\
F_1 \ar[r]_{\underline{h}^1_0} &  M }
\]
where $F_i = h^i_0(F)$, $i=1,2$, $M= \pi(F)$ for $\pi:= h_0^1\circ h_0^2$, and the homogeneity structures on
$F_i$ are obtained by the restrictions $\underline{h}^i$ of the corresponding actions by homotheties on $F$. According to
Theorem~\ref{thm:main}, each of the four homogeneity structures in the diagram comes from a unique graded bundle structure of degree $m$ or $n$. A function $f\in\Csi(F)$ is called {\it homogeneous of (bi-)degree}
$(r, s)\in\N\times \N$ if
$$
f\circ h_t^1 =t^r \cdot f\ \text{and}\ f\circ h_t^2 =t^s \cdot f.
$$

\begin{example} Let $M$ be a smooth manifold, and $m,n\in\N$. Let us consider the following equivalence
relation on the set of smooth maps $\za:\R^2\ra M$: $\za \sim \za'$ if and only if for each smooth map
$x:M\to\Real$ the partial derivatives $\frac{\partial^{r+s}}{\partial^r t\partial^s u}$ at the point $(0, 0)$
of $x\circ \za$ and $x\circ \za'$ coincide for any $0\leq r\leq m$ and $0\leq s\leq n$. In particular,
$\za(0,0) = \za'(0, 0)$. Let us define the jet bundle $\jetbndlmn{F}$ as the collection of all cosets of the
relation $\sim$. A coset of $\za$ will be denoted with $[\za]_{m,n} \in \jetbndlmn{F}$. The bundle projection
$q: \jetb{m, n}{M} \to M$ reads $q([\za]_{m,n})= \za(0,0)$. The bundle $\jetb{m, n}{M}$ is equipped with the
canonical DHS induced by the following two actions of $(\R_{\ge 0},\cdot)$ on a representative
$\za:\Real\times\Real \to M$ of $[\za]_{m,n}$:
$$
(h^1_s \za)(t, u)  = \za(st,u),\quad (h^2_s \za)(t, u) = \za(t, s u).
$$
We shall usually write $t._i x$ for $h^i(t, x)$, $i=1,2$. Let us observe that we have a canonical isomorphism
\be{e:jetiso}
I: \jetb{m, n}{M} \to \jetb{n}{\jetb{m}{M}}, \quad [\za]_{m, n}\mapsto [\zb]_n\,,
\ee
where $\zb:\Real\to \jetb{m}M$ is given by $\zb(u) = [t\mapsto \za(t, u)]_m$.

Similarly, one can define the isomorphism $I': \jetb{m, n}{M} \to \jetb{m}{\jetb{n}{M}}$ such that $I'\circ
I^{-1}$ gives a canonical isomorphism $\jetb{n}{\jetb{m}{M}}\to\jetb{m}{\jetb{n}{M}}$.
For $m=n=1$ we recover the well-known involution of the double vector bundle $TTM$ (\cite{Mac}, p. 363). For
$n=1$, we have an isomorphism $T\jetb{m}{M}\simeq\jetb{m}{TM}$ whose dual version,
$T^*\jetb{m}{M}\simeq\jetb{m}{T^*M}$ has been studied in \cite{CCSS}.

Let $(U, (x_a))$ be a chart on $M$. Let us define the functions $x_a^{(r, s)}\in \Csi(q^{-1}(U))$ by
\[
x_a^{(r, s)}([\za]_{m, n}) := \left. \frac{d^{r+s}}{dt^{r}du^s}\right| _{t=0, s=0} x_a (\za(t, u)).
\]
The functions $x_a^{(r, s)}$, $0\leq r\leq m$, $0\leq s\leq n$,  form a homogeneous coordinate system on
$q^{-1}(U)$. The degree of $x_a^{(r, s)}$ is $(r, s)$.
\end{example}

\begin{example} Let $h:\Real_{\ge 0}\times F\to F$ be a homogeneity  structure of degree $m$ with a base $h_0(F)=F_0$. Let
$n\in\N$. Let us define the action $\jetb{n}h: \Real_{\ge 0}\times \jetb{n}F\to \jetb{n}F$ by $(\jetb{n}h)_t
:= \jetb{n}h_t$. It is easy to prove that $\jetb{n}h$ defines a second homogeneity structure on $T^nF$, this time of
degree $m$. Moreover, $\jetb{n}h$ commutes with the canonical homogeneity structure $h^{T^nF}$ of the $n$th
tangent bundle. Indeed, for a curve $\za:\Real\to F$ and $t, u\in\Real$, both $u.T^nh_t([\za]_n)$ and
$(T^nh_t)(u.[\za]_n)$ are equal to the class of the curve $s\mapsto h_t(\za(us))$. Hence, $T^nF$ is a DHS of
degree $(m, n)$:
\[
\xymatrix{
\jetb{n} F \ar[rr]^{h^{T^nF}_0} \ar[d]_{T^nh_0} && F \ar[d]^{h_0}\\
\jetb{n} F_0 \ar[rr]_{{h}^{T^nF_0}_0} &&  F_0 }
\]
We shall call $\jetb{n}h$ the $n$th {\it tangent prolongation} of the homogeneity structure $h$ on  $F$.
\end{example}

\begin{example} Let $(F, h^1, h^2)$ be a DHS of degree $(m, n)$ and let $k\in \N$. Then the $k$-th tangent
prolongations $T^kh^1$ and $T^kh^2$ commute:
\[
(T^kh^1)_t\circ (T^kh^2)_u = T^k(h^1_t\circ h^2_u) = T^k(h^2_u\circ h^1_t) = (T^kh^2)_u\circ (T^kh^1)_t.
\]
Hence, we obtain three commuting homogeneity structures on $T^kF$: $T^kh^1$, $T^kh^2$, and the canonical
homogeneity structure of $T^kF$, which turn $T^kF$ into a triple graded bundle.
\end{example}

The main result of this  section says that any DHS is locally trivial in the sense that we can find an
atlas with local coordinates being simultaneously homogeneous with respect to both homogeneity structures.
\begin{theo}\label{thm:ltrv}  {\rm (On local triviality of DHS)}

\noindent Let $(F, h^1, h^2)$ be a double homogeneity structure of degree $(m, n)$. There exist a covering
$(U)_{U\in\mathcal{U}}$ of $M$ and homogeneous coordinate charts $(\pi^{-1}(U), \xars)_{0\leq r\leq m, 0\leq
s\leq n}$ whose coordinates $\xars$ are homogeneous of bi-degree $(r, s)\in\N\times\N$. Moreover, the
transition functions between the charts preserve this $\N\times \N$-grading, and $(F, h^1, h^2)$ can be naturally embedded as a subbundle in the double homogeneity structure $T^{m,n}F\simeq T^nT^mF\simeq T^mT^nF$.
\end{theo}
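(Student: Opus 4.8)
The plan is to bootstrap off Theorem~\ref{thm:main} in a way that respects the second homogeneity structure throughout. First I would apply Theorem~\ref{thm:main} to $h^1$ alone: this produces the embedding $\zvf^1_m: F\to T^mF$ identifying $F$ with a graded subbundle, and equips $F$ with a graded bundle structure of degree $m$ over $F_1=h^1_0(F)$. The key point is that because $h^2_u$ commutes with $h^1_t$ for all $t,u$, each $h^2_u$ is a morphism of the $h^1$-homogeneity structure, hence (by Theorem~\ref{thm:main}, or rather by its consequence that morphisms of homogeneity structures are graded bundle morphisms) $h^2$ descends to an action $\underline h^2$ on $F_1$ making the square in the definition's diagram commute, and $h^2$ is at the same time a homogeneity structure on the total space $F$ regarded as sitting inside $T^mF$. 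Concretely, $T^m h^2_u$ on $T^mF$ restricts to $h^2_u$ on $\zvf^1_m(F)$, so $(\zvf^1_m(F), h^1|, h^2|)$ is a sub-DHS of $(T^mF, h^{T^mF}, T^mh^2)$, the DHS of Example~5.3. Thus it suffices to prove local triviality, and the embedding statement, for that particular DHS $T^mF$.

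Next I would iterate: on $T^mF$ I have two commuting homogeneity structures, $h^{T^mF}$ of degree $m$ and $T^mh^2$ of degree $n$ (the latter because $h^2$ has degree $n$ and taking $T^m$ preserves the order of vanishing of jets — this was noted in Example~5.3). Apply Theorem~\ref{thm:main} to $T^mh^2$: this embeds $T^mF$ into $T^n(T^mF)$, and because $h^{T^mF}$ commutes with $T^mh^2$ it descends along the projection and is carried into $T^n h^{T^mF}$ on $T^n(T^mF)$. Composing, $F$ embeds into $T^nT^mF$, which by the isomorphism $I$ of Example~5.2 (applied fiberwise, or rather its evident generalization with a base) is $T^{m,n}F$; the isomorphism $I'\circ I^{-1}$ gives $T^nT^mF\simeq T^mT^nF$. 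So the embedding part of the statement reduces to checking that these classical jet-bundle isomorphisms intertwine the relevant pairs of homogeneity structures — a direct computation on representative maps $\za:\R^2\to F$, exactly of the type already carried out in Example~5.2.

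For the local coordinates, I would argue as follows. Fix $p\in M$. Using the graded bundle structure from $h^1$, choose homogeneous coordinates for $h^1$ on a neighborhood; these are of degrees $0,1,\dots,m$ in the $h^1$-grading. Now on this neighborhood we still have the commuting action $h^2$. The trick is that the weight vector field $\zD^1$ of $h^1$ and the weight vector field $\zD^2$ of $h^2$ commute (their flows are $h^1_{e^t}$ and $h^2_{e^s}$, which commute), so they can be simultaneously reduced: working fiber-by-fiber over $F_1$, each fiber is a graded space on which $h^2$ acts by automorphisms, and an automorphism of a graded space acting homogeneously with respect to a second, commuting homogeneity is — by Corollary~\ref{cor:auto} and the eigenspace decomposition argument used just before the last displayed Theorem in Section~4 — simultaneously diagonalizable, so within each $h^1$-homogeneous subspace one can pick coordinates that are also $h^2$-homogeneous. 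Pushing this through the $h^1$-bundle structure (i.e. doing it smoothly in the base $F_1$, and then again in the base $M$) yields coordinates $\xars$ of bi-degree $(r,s)$; the transition functions preserve the bi-grading because they are simultaneously $h^1$- and $h^2$-equivariant, hence polynomial and bihomogeneous by the graded-space version of Euler's theorem (Lemma~\ref{L:e}) applied to each grading.

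The main obstacle I expect is the smoothness/coherence of the simultaneous diagonalization as one moves over the base: the pointwise eigenspace decomposition $T_\zvy F=\bigoplus E^\zvy_{r,s}$ must be shown to assemble into a smooth subbundle decomposition and, more to the point, to be realizable by an honest smooth coordinate change on a neighborhood rather than just a linear-algebra statement at each point. The clean way around this is not to do linear algebra by hand but to invoke Theorem~\ref{thm:main} twice as above, so that the smoothness is inherited from that theorem; the residual work is then only the identification of the iterated-jet homogeneity structures with $T^{m,n}F$, which is the computation sketched in Example~5.2. So the honest statement is: the hard part is organizing the argument so that one never has to prove smoothness of the bi-grading directly, and instead gets everything from two applications of Theorem~\ref{thm:main} plus the functoriality of $T^k$ established in Section~3.
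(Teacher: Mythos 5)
Your proposal follows essentially the same route as the paper: the paper also factors the canonical embedding $\zvf:F\to T^{m,n}F$ as $I^{-1}\circ\widetilde{\zvf}^2\circ\zvf^1$, i.e.\ two applications of Theorem~\ref{thm:main} (to $h^1$ and then to the prolongation $T^mh^2$), checks that each embedding respects the other structure via the commutation $h^1_t\circ h^2_u=h^2_u\circ h^1_t$, and obtains the bi-homogeneous coordinates by first building the joint decomposition $T_\zvy F=\bigoplus_{r,s}E_{r,s}$ with $E_{r,s}=E^1_r\cap E^2_s$ (exactly your commuting-projections argument) and then restricting the adapted jet coordinates $(\xars^{(r)})^{(s)}$. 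The only part you leave more implicit than the paper is the verification that the coordinates produced by the second application of Theorem~\ref{thm:main} are simultaneously homogeneous for the first structure, which the paper handles by choosing the initial chart with $\frac{\partial}{\partial\xars}(\zvy)\in E_{r,s}$ and tracking this through both embeddings.
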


\noindent

In the proof of Theorem~\ref{thm:main} we found out that, for any homogeneity structure $h$ of degree $n$ on a manifold $M$, the tangent space $T_\zvy M$ at a point $\zvy\in M_0 = h_0(M)$ admits a canonical decomposition  into the direct
sum (compare with (\ref{e:TMdec})):
\be{e:dec}
T_\zvy M = E_0\oplus E_1 \oplus \ldots \oplus  E_n,
\ee
where $E_0 = T_\zvy M_0$ and $E_r$ is the image of the projection
\be{e:Qrr}
\kdiff{r} H_t : T_\zvy M\to T_\zvy M,
\ee
and $H_t$ is defined by (\ref{e:Htt}). Note that the domain of the map $Q_r$ defined by (\ref{e:Qr}) is only
$E_r \oplus \ldots \oplus  E_n$. We prefer to work with the projection (\ref{e:Qrr}) whose domain is the whole
tangent space $T_\zvy M$ that coincides  with $Q_r$ on the domain of $Q_r$ and which we shall also denote by
$Q_r$. As $\zvy$ varies through $M_0$, we get the decomposition of the tangent bundle of $M$ restricted to
$M_0$, $(TM)_{|M_0} = \bigcup_{\zvy\in M_0} T_\zvy M$, into the direct sum of subbundles $E_0$, $E_1, \ldots,
E_n$. We have also proved there that if only the coordinate chart $(h_0^{-1}(U), (x_{r,a}))$ (not necessarily
homogeneous) on $M$ is such that $\frac{\partial}{\partial x_{r,a}}(\zvy)\in E_r$ for $1\leq r\leq n$ and
every $\zvy\in U\subset M_0$,  and $(x_{0, a})$ are coordinates on $U$ pulled back to  $h_0^{-1}(U)$, then in
the adapted coordinates $(x_{r, a}^{(s)})$ on the higher tangent bundle $\jetbndln$ the functions
$(x^{(r)}_{r, a |\widetilde{M}})$ form a homogeneous coordinate system on $\widetilde{M}$, where
$\widetilde{M} \subset\jetbndln$ is the image of the canonical embedding of $M$ into $\jetbndln$ respecting
the homogeneity structures. We are ready now to start the proof of our theorem.

\medskip
\bepf
We shall embed $F$ into the jet bundle $\jetbndlmn{F}$. Let us define $\zb(t, u) :=h_t^1\circ h_u^2 : F\to F$
and let $\zb_p(t, u) = \zb(t, u)(p)$, for $p\in F$. Let $\zvf: F\to \jetbndlmn{F}$ be defined by
\[
\zvf(p) = [\zb_p]_{m, n}.
\]
Observe that $\zvf$ respects the homogeneity structures. We have
\[
\zb_{h^1(s, p)}(t, u) = h_t^1(h_u^2(h_s^1(p))) = h_{st}^1(h_u^2(p)) = (s._1 \zb_p) (t, u),
\]
so that $\zvf(h^1(s, p)) = s._1\zvf(p)$ and, similarly, $\zvf(h^2(s, p)) = s._2\zvf(p)$. Let
\be{e:TFdec1}
T_\zvy F = E_0^1\oplus E_1^1 \oplus \ldots \oplus E_m^1
\ee
be the canonical decomposition (\ref{e:dec}) of the tangent space of $F$ at a point $\zvy$ in the base $F_1$.
Let us assume that $\zvy\in M$, i.e., $\zvy\in F_1\cap F_2$. Considering the second homogeneity structure on
$F$, we can write
%\be{e:TFdec2}
$$
T_\zvy F = E_0^2\oplus E_1^2 \oplus \ldots \oplus E_n^2.
$$
%\ee
We shall prove that
\be{e:TFdec}
T_\zvy F = \bigoplus_{r,s} E_{r,s},
\ee
where $E_{r,s} = E_r^1\cap E_s^2$, $0\leq r\leq m$, $0\leq s\leq n$.

Let $H_t^i: T_\zvy F\to T_\zvy F$, $H_t^i = \differ_\zvy h_t^i$, $i=1,2$. Differentiating  $h_u^2\circ h^1_t =
h^1_t\circ h^2_u: F\to F$ at $\zvy$, we get
\be{e:commHt}
H_u^2\circ H_t^1 =H_t^1\circ H_u^2.
\ee
Recall that $E_r^i$, $i=1,2$, is the image of the projection
$$Q_r^i = \kdiff{r} H_t^i : T_\zvy F\to T_\zvy F\,.
$$
From (\ref{e:commHt}) we get that $H_u^2(E_r^1)\subseteq E_r^1$, for $0\leq r\leq m$. Hence, also $Q_s^2$
respects the decomposition (\ref{e:TFdec1}). It follows that $E_r^1 = \bigoplus_s E_r^1\cap E_s^2$ and
(\ref{e:TFdec}), as we have claimed. Since $\zvy$ varies through $M$, we get the decomposition of $(TF)_{|M}$
into a direct sum of the vector subbundles $E_{r,s}\to M$.

Let us choose a coordinate chart $(\zp^{-1}(U), (\xars))$ for $\zp:F\to M$ in such a way that
$\frac{\partial}{\partial \xars}(\zvy)\in E_{r,s}$ for $\zvy\in U$. Assume also that the coordinates $ x_{(r,
0), a }$, $x_{(0, s),b}$, $x_{(0,0),c}$ are constant on the fibers of $h^1_0$, $h^2_0$,  and $\zp= h^1_0 \circ
h^2_0$, respectively. We claim that in the adapted coordinates $(\xars^{(r', s')})$ on $\jetbndlmn{F}$ the
restrictions of $\xars^{(r, s)}$  form a homogeneous coordinate system on $\widetilde{F}$.

A general idea is to decompose $\zvf$ into the following embeddings:
\[
\xymatrix{
& F_2 \ar[dd]& & & \jetb{m}F_2\ar[dd] & & & \jetb{m}F\ar[dd] \\
F\ar[ru]\ar[dd]\ar[rrr]^{\zvf^1}&&&\jetb{m}F\ar[rrr]^{\tilde{\zvf}^2}\ar[dd]\ar[ru]&&&**[r]\jetb{n}\jetb{m}F\simeq \jetb{m,n}F \ar[dd]\ar[ru]& \\
&M&&&F_2 &&& F \\
F_1\ar[ur]&&&F\ar[ru]&&&\jetb{n}F\ar[ur] }
\]
Here, $\zvf^1$ and $\widetilde{\zvf}^2$ are the canonical embeddings (\ref{e:defzvf}) of the homogeneity
structures $h^1$ and the tangent prolongation  $\widetilde{h}^2:= \jetb{m}h^2$ of $h^2$, respectively, into
the corresponding higher tangent bundles.

Note that $\zvf^1$, and for the same reason $\widetilde{\zvf}^2$, respects both homogeneity structures.
Indeed, $\zvf^1(h_t^2(p))$, $p\in F$, is the class of the curve $u\mapsto h_u^1(h_t^2(p))$ in $\jetb{m}F$.
This coincides with
$$\widetilde{h}^2_t(\zvf^1(p))
= (\jetb{m}h_t^2)\left([u\mapsto h_u^1(p)]_m\right) = [u\mapsto h_t^2(h_u^1(p))]_m\,.$$ Let us consider now
$\widetilde{F}^1 := \zvf^1(F)$ -- the graded subbundle of degree $m$ of $\jetb{m}{F}\to F$ with the base
$F_1$. Since $\frac{\partial}{\partial\xars}$ is tangent to $E_r^1$ at points of the base $F_1$, we conclude
that $(\xars^{(r)})$ form a homogeneous coordinate system on $\widetilde{F}^1$, where $(\xars^{(r')})$ are the
adapted coordinates on $\jetb{m}{F}\to F$. Moreover, the decomposition (\ref{e:dec}) applied to
$\widetilde{F}^1$ with respect to the second homogeneity structure $\widetilde{h}^2_t$ on $\jetb{m}{F}$ reads
\be{e:decTF1}
T_\zvy\widetilde{F}^1 = \bigoplus_{s=0}^n (T_\zvy\zvf^1)(E_s^2).
\ee
Indeed, by differentiating $\zvf^1\circ h_t^2 = \widetilde{h}^2_t\circ\zvf^1$ at $\zvy$, we get
$(T_\zvy\zvf^1)\circ H_t^2 = \widetilde{H}_t^2\circ (T_\zvy\zvf^1)$, where $\widetilde{H}_t^2 =
\differ_\zvy\widetilde{h}_t^2\in\text{End}(T_\zvy\widetilde{F}^1)$. Hence, $(T_\zvy\zvf^1)\circ Q_s^2 =
\widetilde{Q}_s^2\circ (T_\zvy\zvf^1)$, where $\widetilde{Q}_s^2 = \kdiff{s} \widetilde{H}_t^2$. Since $E_s^2$
is the image of the projection $Q_s^2$, the map $\widetilde{Q}_s^2$ is the projection on
$(T_\zvy\zvf^1)(E_s^2)$, and (\ref{e:decTF1}) follows.

\medskip\noindent
Finally, as $\frac{\partial}{\partial \xars} (\zvy) \in E_s^2$ and
$$(T_\zvy\zvf^1)\left(\frac{\partial}{\partial \xars} (\zvy)\right) =  \frac{\partial}{\partial
\xars^{(r)}} (\zvy) \in T_\zvy \jetb{m}{F}\,,$$ we get
\[
\frac{\partial}{\partial \xars^{(r)}} (\zvy) \in (T_\zvy\zvf^1)(E_s^2) \subset T_\zvy\widetilde{F}^1.
\]
Hence, in the adapted coordinates $(\xars^{(r')})^{(s')}$ on $\jetb{n}{\jetb{m}{F}}$, the functions
$(\xars^{(r)})^{(s)}$ are homogeneous coordinates on $\widetilde{\zvf}^2(\widetilde{F}^1)$. Moreover,
$\widetilde{\zvf}^2(\widetilde{F}^1)$ is identified with $\widetilde{F}=\zvf(F)$ by means of the isomorphism
$I$ (\ref{e:jetiso}), and $I^*((\xars^{(r)})^{(s)}) = \xars^{(r, s)}$. This proves our claim and completes the
proof of the theorem, as $I^{-1}\circ \widetilde{\zvf^2}\circ \zvf^1: F\to \widetilde{F}$ intertwines the
double homogeneity structures on $F$ and $\widetilde{F}$.

\epf

\medskip
One can obtain, quite in parallel to the above proof for a DHS, a similar result for a general $r$-tuple
homogeneity structure.

\begin{theo} Let $(F, h^1, \dots,h^r)$ be a $r$-tuple homogeneity structure of degree $(m_1,\dots,m_r)$. There exist a covering $(U)_{U\in\mathcal{U}}$ of $M=\zp(F)$, $\zp=h^1_0\circ\cdots\circ h^r_0$, and
homogeneous coordinate charts
$$\left(\pi^{-1}(U), x_{(k_1,\dots,k_r)}\right), \ {0\leq k_1,\leq m_1,\dots, 0\leq k_r\leq m_r}\,,$$ whose coordinates
$x_{(k_1,\dots,k_r)}$ are homogeneous of degree $(k_1,\dots,k_r)\in\N^r$. Moreover, the transition functions
between the charts preserves this $\N^r$-grading, and $(F, h^1, \dots,h^r)$ can be naturally embedded as a subbundle in the $r$-tuple homogeneity structure $T^{m_1,\dots,m_r}F\simeq T^{m_1}(\dots (T^{m_r} F)\dots)$.
\end{theo}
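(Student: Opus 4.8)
\medskip\noindent\emph{Plan of proof.} The strategy is to imitate the proof of Theorem~\ref{thm:ltrv}, replacing its two-step chain of canonical embeddings by an $r$-step one. First I would set up the multi-jet bundle $\jetb{m_1,\dots,m_r}{F}$ of jets of maps $\Real^r\to F$ -- the evident generalization of the bundle $\jetbndlmn{F}$ from the Example introducing $T^{m,n}F$, carrying $r$ pairwise commuting homogeneity structures $h^i$, the $i$-th acting by rescaling the $i$-th argument of a representative -- and define $\zvf\colon F\to\jetb{m_1,\dots,m_r}{F}$ by $\zvf(p)=[\zb_p]_{m_1,\dots,m_r}$, where $\zb_p(t_1,\dots,t_r)=h^1_{t_1}\circ\cdots\circ h^r_{t_r}(p)$. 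Since the $h^i$ pairwise commute, the computation from the DHS case carries over verbatim and gives $\zvf(h^i(s,p))=s._i\,\zvf(p)$ for every $i$, so $\zvf$ is equivariant for all $r$ actions; that $\zvf$ is moreover an embedding is checked exactly as there.

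\smallskip The linear-algebraic core is a simultaneous tangent-space decomposition. For $\zvy$ in the common base $M=\zp(F)$, $\zp=h^1_0\circ\cdots\circ h^r_0$, put $H^i_t=\differ_\zvy h^i_t$; differentiating $h^i_s\circ h^j_t=h^j_t\circ h^i_s$ at $\zvy$ gives $H^i_sH^j_t=H^j_tH^i_s$, and then Schwarz's theorem (as in Lemma~\ref{L}) shows that the projections $Q^i_k=\kdiff{k}H^i_t$, whose images $E^i_k$ furnish the decomposition $T_\zvy F=\bigoplus_k E^i_k$ of Theorem~\ref{thm:main} for the $i$-th structure, pairwise commute. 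A commuting family of projections is simultaneously block-diagonalizable, hence
\[
T_\zvy F=\bigoplus_{(k_1,\dots,k_r)} E_{k_1,\dots,k_r},\qquad E_{k_1,\dots,k_r}=\bigcap_{i=1}^{r} E^i_{k_i},
\]
and as $\zvy$ runs through $M$ this refines to a splitting of $(TF)_{|M}$ into subbundles. Over a chart domain $U\subseteq M$ I would then choose coordinates $(x_{(k_1,\dots,k_r)})$ on $\zp^{-1}(U)$ with $\partial/\partial x_{(k_1,\dots,k_r)}(\zvy)\in E_{k_1,\dots,k_r}$, those with some $k_i=0$ being pulled back along the corresponding partial projections -- precisely the choice made in Theorem~\ref{thm:ltrv}.

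\smallskip Next I would factor $\zvf$ through the chain
\[
F\xrightarrow{\zvf^1}\jetb{m_1}{F}\xrightarrow{\widetilde\zvf^2}\jetb{m_2}{\jetb{m_1}{F}}\xrightarrow{\widetilde\zvf^3}\cdots\xrightarrow{\widetilde\zvf^r}\jetb{m_r}{(\cdots(\jetb{m_1}{F})\cdots)}\simeq\jetb{m_1,\dots,m_r}{F},
\]
where $\zvf^1$ is the canonical embedding of $h^1$ and, inductively, $\widetilde\zvf^{j+1}$ is the canonical embedding of the $j$-fold tangent prolongation of $h^{j+1}$ into the relevant higher tangent bundle. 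Using the Examples on tangent prolongations of (double) homogeneity structures, at every stage the prolonged structures together with the canonical structures of the iterated tangent bundles still pairwise commute, and each $\widetilde\zvf^{j}$ respects all of them. Pushing the chosen adapted decomposition forward under the successive differentials exactly as in Theorem~\ref{thm:ltrv}, one finds that the restrictions of the correspondingly iterated adapted coordinates $(\cdots(x_{(k_1,\dots,k_r)}^{(k_1)})\cdots)^{(k_r)}$ form a homogeneous coordinate system on $\widetilde F=\zvf(F)$, and pulling them back along the iterated-jet isomorphism $\jetb{m_1,\dots,m_r}{F}\simeq\jetb{m_r}{(\cdots(\jetb{m_1}{F})\cdots)}$ gives the required coordinates $x_{(k_1,\dots,k_r)}$ of multidegree $(k_1,\dots,k_r)$. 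That the transition functions preserve the $\N^r$-grading is then immediate: any change of such charts intertwines all $r$ families of homotheties, and a fiberwise application of Corollary~\ref{cor:auto} forces it to be polynomial and grading-preserving.

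\smallskip The routine parts -- that $\zvf$ is an embedding, and the coordinate bookkeeping -- are literally as in the DHS case, so the one genuinely new point requiring care is the inductive claim that prolonging the structures keeps them mutually commuting all along the $r$-step chain (so that the prolongation Examples apply at each stage), together with correctly following the nested adapted coordinate systems through the canonical isomorphisms. A slightly more economical alternative is a plain induction on $r$: the first $r-1$ structures give, by the inductive hypothesis, an equivariant embedding $F\hookrightarrow\widetilde F\subset\jetb{m_1,\dots,m_{r-1}}{F}$ with coordinates homogeneous in the first $r-1$ slots, while the prolongation of $h^r$ to $\jetb{m_1,\dots,m_{r-1}}{F}$ commutes with all of those and with the canonical structure of $\jetb{m_1,\dots,m_{r-1}}{F}$; one then runs the construction of Theorem~\ref{thm:main} fiberwise over $M$ once more to introduce the last slot. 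I expect the main obstacle to be keeping this prolongation-commutativity bookkeeping under control rather than any essentially new difficulty.
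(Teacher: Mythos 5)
Your proposal is correct and follows essentially the same route as the paper, which itself only remarks that the $r$-tuple case is obtained ``quite in parallel'' to the DHS proof of Theorem~\ref{thm:ltrv}: the multi-jet embedding $\zvf(p)=[\zb_p]_{m_1,\dots,m_r}$, the simultaneous refinement $T_\zvy F=\bigoplus E_{k_1,\dots,k_r}$ coming from the commuting projections $Q^i_{k}$, and the factorization of $\zvf$ through the chain of tangent prolongations are exactly the intended generalization. No gaps.
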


\medskip
\begin{remark} In the case of a double homogeneity structure $(F, h^1, h^2)$ of degree $(1,1)$, and so a double vector bundle, the above theorem provides an alternative proof of a local decomposition theorem for a double vector bundle. Any double vector bundle $(F, h^1, h^2)$ can be canonically embedded into double vector bundle $\jetb{1,1}{F} \simeq (T(TF); TF, TF; F)$. The adapted coordinates on $TTF$ give, by restrictions,  homogeneous coordinates on $\widetilde{F}$ (the isomorphic image of $F$ in $TTF$).
More generally, for an $r$-tuple homogeneity structures, if each of these homogeneity structures is of degree 1, so each corresponds to a vector bundle structure, we obtain an $r$-vector bundle in the terminology of \cite{GR}. Hence, any $r$-vector bundle $F$ can be canonically embedded as an $r$-vector subbundle of $\jetb{1,1,\ldots, 1}{F} \simeq T(T(\ldots
(TF)\ldots)$. It follows that an $r$-vector bundle $F$ decomposes locally (although not canonically) as
$$
F \simeq U\times \prod_{\ze\in\{0,1\}^r, \ze\neq 0} V_\ze
$$
for some open set $U$ and vector spaces $V_\ze$.
\end{remark}
\medskip
\begin{remark} Note that if $\zD_F^i$, $i=1,2$, are weight vector fields (\ref{wvf}) on $F$ defining a double homogeneity structure on $F$ of degree $(m, n)$, then $\zD_F := \zD_F^1 + \zD_F^2$ defines a homogeneity structure on $F$ of degree $m+n$. For example, with a double vector bundle one can associate a homogeneity structure of degree $2$ like in Example (\ref{e:2vb}). In other words, commuting weight vector fields, unlike the
Euler vector fields associated with vector bundles, are closed with respect to addition.
\end{remark}
\section{Acknowledgements}
We are indebted to the referee for extremely useful remarks and comments that led to improvement of our presentation.

\bigskip
\noindent Janusz Grabowski\\ Institute of Mathematics, Polish Academy of Sciences\\\'Sniadeckich 8, P.O. Box
21, 00-956 Warszawa,
Poland\\{\tt jagrab@impan.pl}\\\\
\noindent Miko\l aj Rotkiewicz\\
Institute of Mathematics,
                University of Warsaw \\
                Banacha 2, 02-097 Warszawa, Poland \\
                 {\tt mrotkiew@mimuw.edu.pl}
\end{document}